\newtheorem{theorem}{Theorem}
\newtheorem{lemma}{Lemma}
\begin{document}

\runninghead{M.~S.~Ali, M.~Shamsi, H.~Khosravian-Arab, D.~F.~M.~Torres and F.~Bozorgnia}

\def\journalname{Journal of Vibration and Control}

\title{A space-time pseudospectral discretization method for solving 
diffusion optimal control problems\\ 
with two-sided fractional derivatives}

\author{Mushtaq Salh Ali\affilnum{1},
Mostafa Shamsi\affilnum{1},
Hassan Khosravian-Arab\affilnum{1},
Delfim F. M. Torres \affilnum{2} and
Farid Bozorgnia\affilnum{3}}

\affiliation{\affilnum{1}Department of Applied Mathematics, 
Faculty of Mathematics and Computer Science, 
Amirkabir University of Technology, Tehran, Iran\\
\affilnum{2}Department of Mathematics, Center for Research 
and Development in Mathematics and Applications (CIDMA),
University of Aveiro, 3810-193 Aveiro, Portugal\\
\affilnum{3}Department of Mathematics, 
Instituto Superior T\'{e}cnico, 
1049-001 Lisbon, Portugal}

\corrauth{Mostafa Shamsi,
Department of Applied Mathematics, 
Faculty of Mathematics and Computer Science, 
Amirkabir University of Technology, 
Hafez Avenue no.~424, 
Tehran, 
Iran}

\email{m\_shamsi@aut.ac.ir}

% --------------------------------

\begin{abstract}
We propose a direct numerical method for the solution of an optimal control 
problem governed by a two-side space-fractional diffusion equation.  The 
presented method contains two main steps. In the first step, the space 
variable is discretized by using the Jacobi--Gauss pseudospectral discretization 
and, in this way, the original problem is transformed into a classical integer-order 
optimal control problem. The main challenge, which we faced in this step, 
is to derive the left and right fractional differentiation matrices. 
In this respect, novel techniques for derivation of these matrices are presented.  
In the second step, the Legendre--Gauss--Radau pseudospectral method is employed. 
With these two steps, the original problem is converted into a convex quadratic 
optimization problem, which can be solved efficiently by available methods. 
Our approach can be easily implemented and extended to cover fractional optimal control 
problems with state constraints. Five test examples are provided to demonstrate 
the efficiency and validity of the presented method. The results show that 
our method reaches the solutions with good accuracy and a low CPU time.
\end{abstract}

% --------------------------------

\keywords{Optimal control of partial differential equations,
two sided space-time fractional diffusion equations,
pseudospectral methods,
Jacobi polynomials,
left and right differentiation matrices.}

\maketitle

% --------------------------------

\section{Introduction}

Fractional (non-integer order) order diffusion equations, 
in comparison with classical integer order counterparts,  
can describe more accurately irregular diffusion processes, 
such as gas diffusion in fractal porous media and  heat conduction 
\citep{Wu2015335-NEW,Wu20176054-NEW,CAMYang2017,CAMYang2017b}. 
Accordingly, the numerical solution of fractional diffusion 
equations has gained considerable attention 
\citep{MEERSCHAERT200680,Doha20141,Zaky2016,Yang2017,CAMYang2016,Feng2018441-NEW,Chen2018-NEW,Yang2017863-NEW}.

Optimal control of fractional diffusion equations has recently received special attention, 
due to their applications in various fields, such as control of temperature in a thermo 
conduction process or mass diffusive transport in a porous media \citep{Ning}.

\citet{MR2739436} considered  the optimal control of 
a time-fractional diffusion equation in a bounded domain and 
investigated the questions of existence and uniqueness of solution. 
Moreover, the first-order optimality conditions are derived in \citep{MR2739436}. 

The optimal control of time-fractional diffusion equations 
with state constraints and non-homogeneous Dirichlet boundary conditions 
is considered in \citet{MR2824729} and \citet{MR2824734}, respectively. In these papers, 
existence and uniqueness of solution and first-order optimality conditions are also studied.
Regarding numerical methods for solving optimal control problems 
of time-fractional diffusion equations, we refer to \cite{doi:10.1080/00207160.2017.1417591,MR3739910}.
The reader interested on the optimal control of fractional partial differential equations is referred to
\citet{MR3787674,MR3654793,Zaky20182667,Bai2018338,Darehmiraki20182149}
and references therein.

Recently, an optimal control problem governed by a two-sided space-fractional 
diffusion equation was considered in \citet{Ning} and \citet{doi:10.1177/1077546317705557}. 
More precisely, a constrained time-dependent optimal control problem defined 
on the space interval $(0,1)$ and time period $[0, T]$ was considered. 
The aim is to find the control $u(x,t)\in L^2\left(0,T;(0,1)\right)$ 
and state $y(x,t)\in L^2\left(0,T;(0,1)\right)$
such that the space-fractional diffusion equation 
\begin{subequations}
\label{main-prob}
\begin{equation*}
\frac{\partial y}{\partial t}(x,t)=c(x,t)\left[r\, {}_{\text{\tiny 0}}
\mathcal D_{x}^{2-\beta} y(x,t) +(1-r) {}_{x}
\hspace{-1pt}\mathcal D_{\text{\tiny 1} }^{2-\beta} 
y(x,t)\right]
\end{equation*}
\vspace{-10pt}
\begin{equation}
+f(x,t)+u(x,t),\label{main-prob-dyn}
\end{equation}
with initial condition 
\begin{equation}
\label{main-prob-ini}
y(x,0)=g(x),\quad x\in (0,1),
\end{equation}
and the boundary conditions 
\begin{equation}
\label{main-prob-bc}
y(0,t)=y(1,t)=0,\quad t\in(0,T),
\end{equation}
are satisfied and the following performance index is minimized:
\begin{align}
J[u]:=&\tfrac{1}{2}\int_{0}^{T}\int_{0}^{1}\big[y(x,t)-z(x,t)\big]^2 \text{d}x \text{d}t 
\nonumber\\
&\qquad+\tfrac{1}{2}\int_{0}^{T}\int_{0}^{1} u^2(x,t) \text{d}x \text{d}t.\label{main-prob-obj}
\end{align}
Moreover, the following constraint on the control function is considered:
\begin{equation}
\label{mainp-path}
u(x,t)\ge u_{\min}(x,t),\quad x\in (0,1),\ t\in(0,T),
\end{equation}
where $u_{\min}$ is a known function.
\end{subequations}
In problem \eqref{main-prob},  $z(x,t)$ is a known function, which represents 
the observed or desired values of state function $y$, $f(x,t)$ is the source or sink term, 
$d(x,t)$ is the diffusivity coefficient and $2-\beta$ is the order of  diffusion, where $0<\beta<1$.
Moreover, ${}_{\text{\tiny 0}}\mathcal D_{x}^{2-\beta}$ and 
${}_{x}\hspace{-1pt}\mathcal D_{\text{\tiny 1} }^{2-\beta}$ denote the left and right 
Riemann--Liouville  fractional derivatives, defined (see, e.g., \citet{Malinowska2012}) as
\begin{align*}
&{}_{\text{\tiny 0}}\mathcal D_{x}^{\alpha} g(x) 
=\dfrac{1}{\Gamma(2-\alpha) }\dfrac{\text{d}^{2}}{\text{d}x^{2}} 
\int_{0}^{x}{(x-s)}^{1-\alpha } g(s) \text{d}s.\\
&{}_{x}\hspace{-1pt}\mathcal D_{\text{\tiny 1} }^{\alpha} g(x) 
= \dfrac{1}{\Gamma(2-\alpha) } \dfrac{\text{d}^{2}}{\text{d}x^{2}}
\int_{s}^{1}{(s-x)}^{1-\alpha} g(s) \text{d}s.
\end{align*}

Generally speaking, the two main challenges in solving problem \eqref{main-prob} numerically 
are: (i) the problem contains both left and right fractional derivatives,
(ii) the fractional derivatives are nonlocal and singular operators.
Moreover, when using finite-difference-based methods, the stiffness 
of the approximation matrix of the fractional derivative is added 
to the two aforementioned drawbacks \citep{doi:10.1177/1077546317705557}.  
Thus, finite difference schemes, for solving problem \eqref{main-prob}, 
require expensive computation time and storage cost.  
In this regard, the main concern in the finite difference based methods for solving the problem \eqref{main-prob} 
is to reduce the demand for time and memory computation \citep{Ning,doi:10.1177/1077546317705557}. 
In \citet{Ning}, necessary optimality conditions for problem \eqref{main-prob} are derived 
and a faithful and fast gradient projection method is developed to solve them numerically. 
In \citet{doi:10.1177/1077546317705557}, to get more reduction of computational times,  
a parallel-in-time algorithm for implementing the gradient projection method 
is presented for problem \eqref{main-prob}.

In this paper, we present another method for solving problem \eqref{main-prob} numerically.
There are two major differences between the method we propose here and the ones of 
\citet{Ning} and \citet{doi:10.1177/1077546317705557}. First, our approach is based on
high-order and global pseudospectral methods rather than finite difference schemes.
Second, our method is direct, that is, does not rely on necessary optimality conditions. 
In a direct method, the optimal control problem is solved by transcribing it into a 
Non-Linear Programming (NLP) problem; 
thereafter, an NLP-solver is used to solve the resulting NLP problem
\citep{MR3089375,BELLOSALATI2018,Behroozifar20182494,Mashayekhi20181621}. 
The direct methods are easily implemented and inequality  
constraints on state and control are handled simpler, in comparison 
with indirect methods \citep{MR3089375,BELLOSALATI2018,Mohammadzadeh2018}.

Pseudospectral methods approximate the unknown function(s) using  
interpolating polynomials with specific collocation points such as 
Legendre--Gauss (LG) \citep{Benson06JoGCaD},
Legendre--Gauss--Lobatto (LGL) \citep{Elnagar95ACITo} 
and Legendre--Gauss--Radau (LGR) points \citep{Garg10A}. 
These selections lead to the three most common types 
of pseudospectral methods, which are referred as 
LG pseudospectral \citep{Benson06JoGCaD}, 
LGL pseudospectral \citep{Fahroo01JoGCaDa,Elnagar95ACITo}, 
and LGR pseudospectral  \citep{Garg11,Garg10A} methods.

It is a well-known fact that, to solve ordinary or partial differential equations 
with a simple domain and a smooth solution, pseudospectral methods can usually 
achieve an accurate solution even with a small number of nodes. On the other hand, 
for problems with a non-smooth solution, the accuracy of pseudospectral methods is reduced. 
However, for problems with a non-smooth solution, they lead to a reasonably accurate 
approximation with less demand on computational time and computer memory.

Due to the mentioned computational efficiency, pseudospectral methods have been popular 
for the numerical solution of optimal control problems governed by integer order differential equations 
\citep{Garg10A,paper_F1,Ezz-Eldien201716,doi:10.1080/00207179.2017.1399216}. 
More recently, pseudospectral methods have been extended for solving optimal control problems governed 
by fractional ordinary and partial differential equations  
\citep{Tang2017333,doi:10.1080/00207160.2017.1417591,MMA:MMA4366}.

Here we present a direct method for solving problem \eqref{main-prob} that consists in two steps.
In the first step, the Jacobi--Gauss pseudospectral method is used for space discretization 
and, as a result, the problem is reduced to a classical optimal control problem.
It is worthwhile to note that, because of the existence of both left and right space-fractional 
derivatives in the considered problem, we need to compute the left and right 
fractional differentiation matrices.  These differentiation matrices,  
which are approximations of the left and right fractional operators, 
play an important role in the pseudospectral method and, therefore, 
need to be obtained accurately. In this paper, by using some useful properties 
of the Jacobi polynomials, we present efficient strategies for computing 
the left and right fractional differentiation matrices. 
In the second step of our method, we apply the Legendre--Gauss--Radau 
pseudospectral method. The resulting ordinary optimal control problem 
is then transcribed into a quadratic optimization problem.
In addition, for easy implementation and analysis, the standard form 
of the quadratic optimization problem is derived.

The rest of the paper is organized as follows. In section 
``Preliminaries and notations'', 
we recall some necessary definitions and relevant properties of the 
Jacobi polynomials, quadrature rules, and the Kronecker product. 
Next section is devoted to the first step of our method, 
where the Jacobi--Gauss pseudospectral method is applied for space 
discretization of the problem. Moreover, the computation of the
differentiation matrices is presented in this section.  
Then the second step of our method is presented, 
where the Legendre--Gauss--Radau pseudospectral method is utilized 
to reduce the problem to a quadratic optimization one. 
In section ``Numerical experiments'', five illustrative examples are investigated 
to assess the accuracy and efficiency of the proposed numerical method.
We end with a section of conclusions.

% ------------------------------------------

\section{Preliminaries and notations}
\label{sec:2}

Jacobi polynomials have great flexibility to develop efficient numerical methods 
for solving a wide range of fractional differential models. Consequently, in the last decade, 
Jacobi polynomials have been widely used to solve fractional problems 
\citep{ESMAEILI20113646,ESMAEILI2011918,BHRAWY2015876,Dehghan20161547,BHRAWY2016832,Zaky2018}. 
Our method uses Jacobi polynomials too. Thus, in this section we briefly review 
the Jacobi polynomials, Jacobi quadrature rules, and some relevant theorems 
on the derivatives of Jacobi polynomials. 

% ---------------

\subsection{Jacobi Polynomials}

The Jacobi polynomials $P_n^{(a,b)}(\tau)$, where $\tau \in [-1,1]$ and 
$n=0,1,\ldots$, are given explicitly by \citet{Gautschi}:
\begin{equation*}
P_n^{(a,b)}(\tau)=\tfrac 1 {2^{n}}\sum_{k=0}^{n}
\begin{pmatrix}
n+a \\
n-k 
\end{pmatrix}
\begin{pmatrix}
n+b \\
k 
\end{pmatrix}
(\tau-1)^k(\tau+1)^{n-k}.
\end{equation*}
However, in practice, one can use the so-called recurrence Bonnet's relation 
to generate the Jacobi polynomials in a stable and accurate manner \citep{Gautschi}.
If $a,b>-1$, Jacobi polynomials are called \emph{classical Jacobi polynomials}.
The well-known Legendre polynomials are a special case of the Jacobi polynomials when $a=b=0$.
In the following, some useful properties of the classical Jacobi polynomials are reviewed.

The classical Jacobi polynomials are orthogonal on the canonical
interval $[-1,1]$ with respect to the weight function
$(1-\tau)^a(1+\tau)^b$, i.e.,
\begin{align}
\int_{-1}^1 (1-\tau)^a&(1+\tau)^b P_n^{(a,b)}(\tau) 
P_m^{ (a,b)}(\tau) d\tau\nonumber \\
&=
\begin{cases}
0, &  m\ne n, \\
\frac{2^{a+b+1}\Gamma(n+a+1)\Gamma(n+b+1)}{n!(2n+a+b+1)
\Gamma(n+a+b+1)}, 
& m=n.
\end{cases}
\label{orthoP}
\end{align}
A useful formula that relates the Jacobi polynomials and their
derivatives is
\begin{align*}
&\frac {d^k}{d\tau^k} P_n^{(a,b)}(\tau)=\frac{\Gamma(k+n+a+b+1)}{2^{k}
\Gamma(n+a+b+1)}P_{n-k}^{(a+k,b+k)}(\tau),\\
&\quad\qquad n\ge k.
\end{align*}
The Jacobi polynomials also satisfy the following properties:
\begin{align}
	P_{k}^{(a, b-1)}(2x - 1) = &\tfrac{k + a + b}{2k + a + b}
P_{k}^{(a, b)}(2x - 1) \nonumber\\
&\ \ +  \tfrac{k + a}{2k + a + b}
P_{k-1}^{(a, b)}(2x - 1),\label{Jac_Prop_1}\\
	P_k^{(a  - 1,b )}(2x-1) = &\tfrac{k + a  + b }{2k+a+b}P_k^{(a ,b )}(2x-1) \nonumber \\
&\ \ - \tfrac{k + b}{2k+a+b}P_{k - 1}^{(a ,b )}(2x-1).\label{Jac_Prop_2}
\end{align}
In the following, we recall two important theorems, 
which are used later to establish our method.

\begin{theorem}[See \citet{zayernouri2013fractional}]
\label{Zayer13_1}
Let $r-\alpha>-1$ and $q+\alpha>-1$. Then, for $x\in[0,1]$, we have:
\begin{align*}
&{}_{\text{\tiny 0}}\mathcal D_{x}^{\alpha}\left[x^{r}P_{k}^{(q,r)}\left({2x}-1\right)\right]\\
&\qquad\quad=\tfrac{\Gamma(k+r+1)}{\Gamma(k+r-\alpha+1)}x^{r-\alpha}
P_{k}^{(q+\alpha,r-\alpha)}\left({2x}-1\right).
\end{align*}		
\end{theorem}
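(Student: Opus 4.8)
The plan is to reduce the claim to the single elementary fact that ${}_{\text{\tiny 0}}\mathcal D_{x}^{\alpha}$ acts on a monomial by a shift of the exponent, and then to verify the identity power by power. Evaluating the integral in the displayed definition of ${}_{\text{\tiny 0}}\mathcal D_{x}^{\alpha}$ for $g(s)=s^{\mu}$ is a Beta integral, and carrying out the two differentiations gives, for every $\mu>-1$,
\begin{equation*}
{}_{\text{\tiny 0}}\mathcal D_{x}^{\alpha}\,x^{\mu}
=\frac{\Gamma(\mu+1)}{\Gamma(\mu-\alpha+1)}\,x^{\mu-\alpha}.
\end{equation*}
Since both sides of the asserted identity are finite combinations of powers of $x$ (times $x^{r}$ on the left and $x^{r-\alpha}$ on the right), it suffices to expand each side as a power series in $x$, apply this rule termwise on the left, and match coefficients.

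The decisive step is to choose a representation of $P_{k}^{(q,r)}(2x-1)$ in \emph{pure powers of $x$} in which the parameter change $(q,r)\mapsto(q+\alpha,r-\alpha)$ is transparent. Combining the reflection symmetry $P_{k}^{(q,r)}(2x-1)=(-1)^{k}P_{k}^{(r,q)}(1-2x)$ with the standard ${}_2F_1$ representation of the Jacobi polynomials \citep{Gautschi} gives
\begin{equation*}
x^{r}P_{k}^{(q,r)}(2x-1)
=(-1)^{k}\frac{(r+1)_{k}}{k!}\sum_{m=0}^{k}
\frac{(-k)_{m}\,(k+q+r+1)_{m}}{(r+1)_{m}\,m!}\,x^{r+m},
\end{equation*}
where $(z)_{m}=\Gamma(z+m)/\Gamma(z)$. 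The key observation is that the upper parameter $k+q+r+1$ depends on $q,r$ only through the sum $q+r$, which is invariant under $(q,r)\mapsto(q+\alpha,r-\alpha)$; hence the same expansion applied to $P_{k}^{(q+\alpha,r-\alpha)}(2x-1)$ reuses exactly the numerators $(-k)_{m}(k+q+r+1)_{m}$, only the prefactor and the denominator Pochhammer symbols changing from $(r+1)$ to $(r-\alpha+1)$.

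Applying the monomial rule termwise on the left is legitimate because every exponent satisfies $r+m\ge r>-1$ (since $\alpha>0$ and $r-\alpha>-1$ force $r>-1$), so each $x^{r+m}$ becomes $\tfrac{\Gamma(r+m+1)}{\Gamma(r+m-\alpha+1)}x^{r+m-\alpha}$. Expanding the right-hand side of the theorem by the same formula with $q\mapsto q+\alpha$, $r\mapsto r-\alpha$, and cancelling the common factor $(-1)^{k}(-k)_{m}(k+q+r+1)_{m}/(k!\,m!)$, agreement of the coefficients of $x^{r+m-\alpha}$ reduces to
\begin{equation*}
\frac{(r+1)_{k}}{(r+1)_{m}}\,\frac{\Gamma(r+m+1)}{\Gamma(r+m-\alpha+1)}
=\frac{\Gamma(k+r+1)}{\Gamma(k+r-\alpha+1)}\,\frac{(r-\alpha+1)_{k}}{(r-\alpha+1)_{m}}.
\end{equation*}
Rewriting each Pochhammer symbol through $\Gamma$ collapses both sides to $\Gamma(k+r+1)/\Gamma(r+m-\alpha+1)$, so the identity holds for every $m$ and the theorem follows; the hypothesis $q+\alpha>-1$ is exactly what guarantees that the polynomial $P_{k}^{(q+\alpha,r-\alpha)}$ appearing on the right is again a classical Jacobi polynomial.

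I expect the only genuine obstacle to be the bookkeeping of the middle step: selecting the representation whose numerators are stable under the shift $(q,r)\mapsto(q+\alpha,r-\alpha)$, and then cancelling the common factors so that the coefficient comparison collapses to the clean $\Gamma$-telescoping above. Everything else—the monomial rule and the termwise differentiation—is routine once the correct expansion is in hand.
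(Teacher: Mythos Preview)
Your argument is correct. The monomial rule you state follows directly from the Beta-integral evaluation of the paper's displayed definition of ${}_{\text{\tiny 0}}\mathcal D_{x}^{\alpha}$; the hypergeometric expansion of $P_{k}^{(q,r)}(2x-1)$ in powers of $x$ via the reflection $P_{k}^{(q,r)}(2x-1)=(-1)^{k}P_{k}^{(r,q)}(1-2x)$ is standard; and the coefficient check collapses exactly as you wrote, both sides equalling $\Gamma(k+r+1)/\Gamma(r+m-\alpha+1)$.

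There is nothing to compare against: the paper does \emph{not} prove this theorem. It is quoted as a known identity with the citation ``See \citet{zayernouri2013fractional}'', and is then used as a black box in the proofs of Theorems~\ref{Thm_1} and~\ref{Thm_2}. So you have supplied a self-contained proof where the paper has only a reference. Your route---monomial rule plus termwise action on a ${}_2F_1$ expansion whose upper parameter $k+q+r+1$ is invariant under $(q,r)\mapsto(q+\alpha,r-\alpha)$---is in fact the classical way this identity is established, and your diagnosis that the only real work is choosing the expansion making the parameter shift transparent is spot on.

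One minor remark: the condition $q+\alpha>-1$ is not actually needed for the algebraic identity to hold (both sides are well-defined polynomials in $x$ times a power of $x$ regardless), so your closing sentence slightly overstates its role; it matters only if one wants the right-hand Jacobi polynomial to belong to a classical orthogonal family, which is how the paper subsequently uses the result.
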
  

\begin{theorem}[See  \citet{zayernouri2013fractional}]
\label{Zayer13_2}
Let $q-\alpha>-1$ and $r+\alpha>-1$. Then, for $x\in[0,1]$, we have:
\begin{align*}
&{}_{x}\hspace{-1pt}\mathcal D_{\text{\tiny 1}}^{\alpha}
\left[(1-\tau)^{q}P_{k}^{(q,r)}\left({2x}-1\right)\right]\\
&\quad\qquad = \tfrac{\Gamma(k+q+1)}{\Gamma(k+q-\alpha+1)}(1-\tau)^{r-\alpha}
P_{k}^{(q-\alpha,r+\alpha)}\left({2x}-1\right).
\end{align*}		
\end{theorem}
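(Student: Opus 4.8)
The plan is to deduce Theorem \ref{Zayer13_2} from Theorem \ref{Zayer13_1} rather than redo a direct computation, since the two statements are mirror images of one another: the right derivative ${}_{x}\hspace{-1pt}\mathcal D_{\text{\tiny 1}}^{\alpha}$ on $[0,1]$ is conjugate to the left derivative ${}_{\text{\tiny 0}}\mathcal D_{x}^{\alpha}$ under the reflection $x\mapsto 1-x$, and the Jacobi polynomials carry a matching reflection symmetry. (A direct proof mimicking Theorem \ref{Zayer13_1} via the series expansion of $P_k^{(q,r)}$ and term-by-term fractional integration is possible, but the reflection route is far more economical.)

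First I would establish the operator reflection identity: if $g(x)=\phi(1-x)$, then
\[
{}_{x}\hspace{-1pt}\mathcal D_{\text{\tiny 1}}^{\alpha} g(x)
=\left[{}_{\text{\tiny 0}}\mathcal D_{u}^{\alpha}\phi\right]_{u=1-x}.
\]
Starting from ${}_{x}\hspace{-1pt}\mathcal D_{\text{\tiny 1}}^{\alpha} g(x)=\frac{1}{\Gamma(2-\alpha)}\frac{d^2}{dx^2}\int_x^1 (s-x)^{1-\alpha}g(s)\,ds$ and substituting $s=1-\sigma$, the integral $\int_x^1 (s-x)^{1-\alpha}\phi(1-s)\,ds$ becomes $\int_0^{1-x}\big((1-x)-\sigma\big)^{1-\alpha}\phi(\sigma)\,d\sigma$, which is exactly the Riemann--Liouville fractional integral occurring in ${}_{\text{\tiny 0}}\mathcal D^{\alpha}$, evaluated at $1-x$. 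Differentiating twice in $x$ contributes $(-1)^2=1$, so no sign is lost and the identity holds.

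Next I would pass to the reflected variable $u:=1-x$ in the argument of Theorem \ref{Zayer13_2}. Using $2x-1=-(2u-1)$ together with the standard symmetry $P_k^{(a,b)}(-\tau)=(-1)^kP_k^{(b,a)}(\tau)$ of classical Jacobi polynomials, I obtain
\[
(1-x)^{q}P_{k}^{(q,r)}(2x-1)=(-1)^{k}\,u^{q}P_{k}^{(r,q)}(2u-1)=:\phi(u).
\]
Now $\phi$ is precisely of the form to which Theorem \ref{Zayer13_1} applies, with the exponent of $u$ matching the second Jacobi index. Matching parameters $(q_{\mathrm{Thm\,1}},r_{\mathrm{Thm\,1}})=(r,q)$, the required hypotheses $r_{\mathrm{Thm\,1}}-\alpha>-1$ and $q_{\mathrm{Thm\,1}}+\alpha>-1$ become exactly $q-\alpha>-1$ and $r+\alpha>-1$, the assumptions of Theorem \ref{Zayer13_2}. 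Theorem \ref{Zayer13_1} then yields
\[
{}_{\text{\tiny 0}}\mathcal D_{u}^{\alpha}\phi(u)
=(-1)^{k}\tfrac{\Gamma(k+q+1)}{\Gamma(k+q-\alpha+1)}\,u^{q-\alpha}P_{k}^{(r+\alpha,\,q-\alpha)}(2u-1).
\]

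Finally I would substitute $u=1-x$ back through the reflection identity and apply the Jacobi symmetry a second time to $P_{k}^{(r+\alpha,q-\alpha)}\big(-(2x-1)\big)=(-1)^{k}P_{k}^{(q-\alpha,\,r+\alpha)}(2x-1)$. The two factors of $(-1)^{k}$ cancel, producing $(1-x)^{q-\alpha}P_{k}^{(q-\alpha,r+\alpha)}(2x-1)$ with the stated Gamma prefactor, which is the claim. The main thing to handle with care is the derivation of the reflection operator identity, where the change of variables sits inside the second derivative; the rest is bookkeeping, chiefly checking that the two sign factors cancel and that the parameter relabeling lines the hypotheses up correctly.
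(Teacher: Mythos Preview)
The paper does not supply its own proof of this statement: Theorem~\ref{Zayer13_2} is quoted directly from \citet{zayernouri2013fractional} with no argument given, so there is nothing in the paper to compare against. Your reflection argument is correct and self-contained. The operator identity ${}_{x}\mathcal D_{1}^{\alpha}[\phi(1-\cdot)](x)=\big({}_{0}\mathcal D_{u}^{\alpha}\phi\big)(1-x)$ is derived cleanly (the two sign changes from $d/dx$ indeed cancel), the Jacobi parity $P_k^{(a,b)}(-\tau)=(-1)^kP_k^{(b,a)}(\tau)$ is applied correctly on both ends, and the parameter matching with Theorem~\ref{Zayer13_1} lines up the hypotheses exactly as you claim.

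One remark worth making explicit in your write-up: the formula you actually derive has the weight $(1-x)^{q-\alpha}$ on the right-hand side, whereas the paper's printed statement carries $(1-\tau)^{r-\alpha}$. Your exponent $q-\alpha$ is the correct one (and $\tau$ is clearly a typographical slip for $x$); this is consistent with the source \citet{zayernouri2013fractional} and with how the result is used downstream in the proof of Theorem~\ref{Thm_2}. So your argument not only proves the intended statement but silently corrects a misprint.
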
  

% ---------------

\subsection{Jacobi nodes and quadratures} 

In the pseudospectral methods, the Jacobi--Gauss and Jacobi--Gauss--Radau 
nodes are successfully used as discretization points \citep{Garg10A}. Here 
we recall the definitions of these nodes and their corresponding  
quadrature rules  \citep{Gautschi,Garg11}. 

Since the Jacobi polynomials are orthogonal in $[-1,1]$, all the zeros of 
$P_n^{(a,b)}(\tau)$ are simple and belong to the interval $(-1,1)$ \citep{Gautschi}. 
These zeros are called the Jacobi--Gauss nodes with parameters $a$ and $b$, 
which we denote by $\{\xi^{(a,b)}_i\}_{i=1}^n$. The Jacobi--Gauss quadrature rule 
with parameters $a$ and $b$ is based on the Jacobi--Gauss nodes 
$\{\xi^{(a,b)}_i\}_{i=1}^n$ and can be used for
approximating the integral of a function over 
the interval $[-1,1]$ with weight $(1-x)^a(1+x)^b$ as
\begin{equation} 
\label{JGQ}
\int_{-1}^{1} (1-x)^a(1+x)^b f(x) \text{d}x 
\simeq \sum_{i=1}^n \omega^{(a,b)}_i f(\xi^{(a,b)}_i),
\end{equation}
where $\omega^{(a,b)}_i$, $i=1,\ldots,n$, are the Jacobi--Gauss 
quadrature weights. The above quadrature is exact whenever $f(x)$ 
is a polynomial of degree equal or less than $2n-1$, i.e., 
in the Jacobi--Gauss quadrature rule, the degree of exactness is $2n-1$.

In the Jacobi--Gauss--Radau nodes $\{\tau^{(a,b)}_i\}_{i=1}^m$, 
the first node is $\tau_1=-1$ and the last $m-1$ nodes are the zeros 
of $P^{(a,b)}_{m-2}(t) + P^{(a,b)}_{m-1}(t)$. We note that the zeros 
of $P^{(a,b)}_{m-2}(t) + P^{(a,b)}_{m-1}(t)$ belong to $(-1,1)$. Thus, 
the Jacobi--Gauss--Radau points lie in the interval $[-1,1)$.  
The Jacobi--Gauss--Radau quadrature rule with parameters $a$ and $b$ is given by
\begin{equation} 
\label{JGLQ}
\int_{-1}^{1} (1-t)^a(1+t)^b f(t) \text{d}t 
\simeq \sum_{i=1}^m \varpi^{(a,b)}_i f(\tau^{(a,b)}_i),
\end{equation}
where $\tau^{(a,b)}_i$, $i=1,\ldots,m$, are the Jacobi--Gauss--Lobatto 
quadrature weights. The degree of exactness of the Jacobi--Gauss--Radau 
quadrature is $2m-2$.

% ---------------

\subsection{Kronecker products}

Let $\mathbf A\in \mathbb{R}^{m\times n}$, $\mathbf B\in
\mathbb{R}^{p\times q}$. The Kronecker product of matrices 
$\mathbf A$ and $\mathbf B$ is defined as 
\begin{equation*}
\mathbf A \otimes \mathbf B=\left[
\begin{array}{ccc}
{a}_{11}\mathbf B & \cdots & a_{1n}\mathbf B \\
\vdots & \ddots & \vdots \\
{a}_{m1}\mathbf B & \cdots & a_{mn}\mathbf B \\
\end{array}
\right] \in \mathbb{R}^{mp\times nq}.
\end{equation*}

\begin{theorem}[See \citet{laub2005matrix}]
\label{KRON}
For any three matrices $\mathbf A$, $\mathbf B$ and $\mathbf E$, 
for which the matrix product $\mathbf {ABE}$ is defined, one has 
\begin{equation*}
\text{\bf{vec}}(\mathbf {ABE}) 
= (\mathbf R^T\otimes \mathbf A)\text{\bf{vec}}(\mathbf B),
\end{equation*}
where $\text{\bf{vec}}$ is the vectorization operator, 
which converts a matrix into a column vector by
stacking the columns of the matrix on the top of one another.
\end{theorem}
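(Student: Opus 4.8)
The plan is to prove the identity by working column-by-column and matching the block structure of the Kronecker product. First I would fix dimensions compatible with the product $\mathbf{ABE}$, say $\mathbf{A}\in\mathbb{R}^{m\times n}$, $\mathbf{B}\in\mathbb{R}^{n\times p}$ and $\mathbf{E}\in\mathbb{R}^{p\times q}$ (so that the matrix written $\mathbf{R}^T$ in the statement is read as $\mathbf{E}^T$). I would denote by $\mathbf{b}_1,\ldots,\mathbf{b}_p$ the columns of $\mathbf{B}$ and by $\mathbf{e}_k$ the standard basis vectors of the appropriate dimension, so that $\mathbf{B}\mathbf{e}_\ell=\mathbf{b}_\ell$ and $\text{\bf{vec}}(\mathbf{B})$ is the stack of $\mathbf{b}_1,\ldots,\mathbf{b}_p$.

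The key computation is to express the $k$-th column of $\mathbf{ABE}$. Since the $k$-th column of $\mathbf{E}$ is $\mathbf{E}\mathbf{e}_k=\sum_{\ell=1}^{p}E_{\ell k}\mathbf{e}_\ell$, linearity gives $(\mathbf{ABE})\mathbf{e}_k=\mathbf{AB}\sum_{\ell=1}^{p}E_{\ell k}\mathbf{e}_\ell=\sum_{\ell=1}^{p}E_{\ell k}\,\mathbf{A}\mathbf{b}_\ell$. Stacking these columns for $k=1,\ldots,q$ yields $\text{\bf{vec}}(\mathbf{ABE})$ as a column vector whose $k$-th block of length $m$ equals $\sum_{\ell=1}^{p}E_{\ell k}\,\mathbf{A}\mathbf{b}_\ell$. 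On the other side, by definition of the Kronecker product $\mathbf{E}^T\otimes\mathbf{A}$ is a $q\times p$ block matrix whose $(k,\ell)$ block is $(\mathbf{E}^T)_{k\ell}\,\mathbf{A}=E_{\ell k}\,\mathbf{A}$; hence the $k$-th block of $(\mathbf{E}^T\otimes\mathbf{A})\text{\bf{vec}}(\mathbf{B})$ is $\sum_{\ell=1}^{p}E_{\ell k}\,\mathbf{A}\mathbf{b}_\ell$, which coincides with the block computed above. Since the two vectors agree block-by-block, they are equal, which is the claim.

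The only genuine obstacle is notational rather than conceptual: one must line up the transpose in $\mathbf{E}^T$ with the correct block index of the Kronecker product so that the scalar multiplying $\mathbf{A}\mathbf{b}_\ell$ is $E_{\ell k}$ and not $E_{k\ell}$, and this is precisely why the transpose appears in the statement. An alternative, equally short route would be to first verify the rank-one case through the elementary identity $\text{\bf{vec}}(\mathbf{x}\mathbf{y}^T)=\mathbf{y}\otimes\mathbf{x}$ and then extend by linearity using the expansion $\mathbf{B}=\sum_{i,\ell}B_{i\ell}\mathbf{e}_i\mathbf{e}_\ell^T$; however, the direct column-stacking argument above avoids any bilinearity bookkeeping and is the cleanest to present.
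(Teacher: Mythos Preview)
Your argument is correct: the column-by-column block computation cleanly matches the $(k,\ell)$ block $E_{\ell k}\mathbf{A}$ of $\mathbf{E}^T\otimes\mathbf{A}$ against the $k$-th column $\sum_\ell E_{\ell k}\mathbf{A}\mathbf{b}_\ell$ of $\mathbf{ABE}$, and your remark that $\mathbf{R}^T$ must be read as $\mathbf{E}^T$ is right (this is a typo in the statement).

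There is nothing to compare against, however, because the paper does not supply its own proof of this identity; the theorem is simply quoted from \citet{laub2005matrix} and used as a black box later when rewriting the collocated dynamics in Kronecker form. Your write-up therefore goes beyond what the paper does, and either of the two routes you describe (direct block matching, or the rank-one identity $\text{\bf vec}(\mathbf{x}\mathbf{y}^T)=\mathbf{y}\otimes\mathbf{x}$ extended by bilinearity) would be a perfectly acceptable self-contained justification.
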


% ------------------------------------------

\section{Step I: The Jacobi--Gauss pseudospectral method for spatial discretization}
\label{sec:3}

The first step of our method consists to discretize the spatial variable $x$.
For that, we use the pseudospectral method based on the Jacobi--Gauss nodes 
with parameters $(a,b)=(1,1)$.  Let $n$ be a positive integer number. Since 
the spatial domain of the problem is $[0,1]$, we consider the  
Jacobi--Gauss nodes correspondent to the interval $[0,1]$ as
\begin{equation}
\label{ShiftedNodes}
\hat \xi_i:=\frac 1 2 \left(\xi^{\text{\tiny(1,1)}}_i+1\right),
\quad i=1,\dots,n.
\end{equation}
The control and state functions are approximated as
\begin{align}
&u(x,t)\simeq \tilde u_n(x,t)
:=\sum_{j=1}^n u_j(t)\ell_j(x),\label{ap_u_n}\\
&y(x,t)\simeq \tilde y_n(x,t)
:=\sum_{j=1}^{n} y_j(t)\mathring{\ell}_j(x),\label{ap_y_n}
\end{align}
where $\ell_j(x)$, $j=1,\ldots,n$, are the Lagrange basis 
polynomials based on the Jacobi--Gauss nodes $\left\{ \hat\xi_i \right\}_{i=1}^n$, i.e.,
\begin{equation}
\label{elldef}
\ell_j(x):=\prod_{\substack{k=1 \\ k\ne j}}^{n} 
\frac {x- \hat\xi_k }{\hat\xi_j - \hat\xi_k }
\end{equation}
and
\begin{equation*}
\mathring{\ell}_j(x):=\frac{x(1-x)}{\hat\xi_j
\left(1-\hat\xi_j\right)}\ell_j(x).
\end{equation*}
Note that $\ell_j(x)$ and $\mathring{\ell}_j(x)$ satisfy 
the following  Kronecker properties in the Jacobi--Gauss nodes:
\begin{equation}
\label{Kronpp}
\mathring{\ell}_{j}(\hat{\xi}_i )=\ell_{j}(\hat{\xi}_i )
=\begin{cases}
0 \ \ if  \ \ i\neq j\\
1 \ \ if \ \  i= j
\end{cases},\ \ j=1,\ldots,n.
\end{equation}
Moreover, $\mathring{\ell}_j(0)=\mathring{\ell}_j(1)=0$, $j=1,\ldots,n$. 
Therefore, for each value of $y_j(t)$, $j=1,\ldots,n$, the approximation 
$\tilde y_n(x,t)$ satisfies the boundary conditions \eqref{main-prob-bc}. 

By substituting the approximations \eqref{ap_u_n} and \eqref{ap_y_n} 
in equation \eqref{main-prob-dyn}, and by simple algebraic manipulation, 
we get 
\begin{align*}
&\sum_{j=1}^{n} \dot y_j(t)\mathring{\ell}_j(x)
= c(x,t)\left[r \sum_{j=1}^{n} y_j(t)  {}_{\text{\tiny 0}}
\mathcal D_{x}^{2-\beta} \mathring{\ell}_j(x) \right.\\ 
&\left. +(1-r)\sum_{j=1}^{n} y_j(t) {}_{x}
\hspace{-1pt}\mathcal D_{\text{\tiny 1} }^{2-\beta}\mathring{\ell}_j(x)\right]  
+ f(x,t) +\sum_{j=1}^n u_j(t)\ell_j(x).
\end{align*}
Now, by collocating the above equation in $\hat{\xi}_i$, $i=1,\ldots,n$, 
and using the Kronecker property \eqref{Kronpp}, we get
\begin{multline}
\label{finp}
\dot y_i(t)= c(\hat{\xi}_i,t)
\left[r \sum_{j=1}^{n} y_j(t)  {}_{\text{\tiny 0}}
\mathcal D_{x}^{2-\beta} \mathring{\ell}_j(\hat{\xi}_i) 
\right.\\
\left. \hspace{-3pt}+(1-r)\sum_{j=1}^{n} y_j(t) {}_{x}\hspace{-1pt}\mathcal 
D_{\text{\tiny 1} }^{2-\beta}\mathring{\ell}_j(\hat{\xi}_i)\right]
 + f(\hat{\xi}_i,t) + u_i(t).
\end{multline}
Let 
\begin{align}
 d_{ij}^+:=  {}_{\text{\tiny 0}}\mathcal D_{x}^{2-\beta} 
\mathring{\ell}_j(\hat{\xi}_i),\quad
 d_{ij}^-:= {}_{x}\hspace{-1pt}\mathcal 
D_{\text{\tiny 1} }^{2-\beta} \mathring{\ell}_j(\hat{\xi}_i).\label{lrd}
\end{align}
Then, equation \eqref{finp} can be written as
\begin{align*}
\dot y_i(t)= 
c(\hat{\xi}_i,t)\left[r \sum_{j=1}^{n} d_{ij}^+\,  y_j(t) 
+(1-r)\sum_{j=1}^{n} d_{ij}^-\, y_j(t) \right]
\end{align*}
\vspace{-10pt}
\begin{align}
\qquad\qquad + f(\hat{\xi}_i,t) + u_i(t),\qquad  i=1,\ldots,n. \label{SODE}
\end{align}
The above equations form a linear system of time-varying 
ordinary differential equations. To derive the vector form 
of the above system, let
\begin{align*}
&\mathbf y(t):=\left[ y_1(t),\dots, y_n(t)\right]^{\text{T}},\
\mathbf u(t):=\left[ u_1(t),\dots, u_n(t)\right]^{\text{T}},\\
&\mathbf f(t):=\big[f(\hat{\xi}_1,t),\dots,f(\hat{\xi}_n,t)\big]^{\text{T}},\\
&\mathbf C(t):=\text{diag}\Big(c(\hat{\xi}_1,t),\dots,c(\hat{\xi}_n,t)\Big).
\end{align*}
Using the above notations, the system of differential equations \eqref{SODE} 
can be expressed as 
\begin{equation*}
\mathbf{\dot y}(t)=\mathbf C(t) \left(r\, {\mathlarger{\mathbf{\mathfrak D}}}_{+}^{2-\beta} 
+(1-r) {\mathlarger{\mathbf{\mathfrak D}}}_{-}^{2-\beta} \right) \mathbf y(t) 
+ \mathbf f(t)+ \mathbf u(t),
\end{equation*}
where 
${\mathlarger{\mathbf{\mathfrak D}}}_{+}^{2-\beta}$ 
and ${\mathlarger{\mathbf{\mathfrak D}}}_{-}^{2-\beta}$
are called the left and right fractional differentiation matrices, 
respectively, and are defined as
\begin{align*}
{\mathlarger{\mathbf{\mathfrak D}}}_+^{2-\beta}:=
\Big[
d_{ij}^+
\Big]_{\substack{i=1,\dots,n\\j=1,\dots,n}},\quad
{\mathlarger{\mathbf{\mathfrak D}}}_{-}^{2-\beta}:=
\Big[
d_{ij}^-
\Big]_{\substack{i=1,\dots,n\\j=1,\dots,n}}.
\end{align*}
The initial condition \eqref{main-prob-ini}
is discretized to 
\begin{equation}
\mathbf y(0)=\mathbf g:=\left[g(\hat{\xi}_1),
\ldots,g(\hat{\xi}_n)\right]^{\text{T}}.
\end{equation}
Moreover, the inequality condition \eqref{mainp-path} on the control 
function is discretized into the following inequality constraints:
\begin{equation*}
u(\hat{\xi}_i,t)\ge u_{\min}(\hat{\xi}_i,t),
\quad i=1,\ldots,n.
\end{equation*}
The above inequalities can be rewritten in vector form as
\begin{equation}
\label{pathineq}
\mathbf u(t)\ge \mathbf u_{\min}(t),
\end{equation}
where 
\begin{equation*}
\mathbf u_{\min}(t):=\left[ u_{\min}(\hat{\xi}_1,t),
\dots, u_{\min}(\hat{\xi}_n,t)\right]^{\text{T}}.
\end{equation*}

Now, we turn to discretize the performance index. By using 
the Jacobi--Gauss quadrature rule \eqref{JGQ}, we approximate 
the inner integrals in the performance index \eqref{main-prob-obj} as
\begin{align*}
J(u)\simeq J_n[u]=\tfrac{1}{2}&\int_{0}^{T}\sum_{j=1}^{n}\tfrac 1 2 
\omega^{\text{\tiny(1,1)}}_i\big[y(\hat\xi_j,t)-z(\hat\xi_j,t)\big]^2  
\text{d}t \\
&+\tfrac{1}{2}\int_{0}^{T}\sum_{j=1}^{n} \tfrac 1 2 
\omega^{\text{\tiny(1,1)}}_j u^2(\hat{\xi}_j,t) \text{d}t,
\end{align*}
where $\omega^{\text{\tiny(1,1)}}_j$, $j=1,\dots,n$, 
are the Jacobi--Gauss weights. We rewrite $J_n$ as 
\begin{align*}
J_n[\mathbf u]=\tfrac{1}{2}&\int_{0}^{T}\big[\mathbf y(t)
-\mathbf z(t)\big]^{\text{T}}\mathbf W \big[\mathbf y(t)
-\mathbf z(t)\big] \text{d}t \nonumber\\
&\quad + \tfrac{1}{2}\int_{0}^{T}\mathbf u^\text{T}(t) \mathbf W  
\mathbf u(t)\,\text{d}t,
\end{align*}
where
\begin{align*}
&\mathbf z(t):=
\begin{bmatrix}\frac{z(\hat{\xi}_1,t)}{\hat{\xi}_1\left(1-\hat{\xi}_1\right)}
&\dots&\frac{z(\hat{\xi}_n,t)}{\hat{\xi}_n\left(1-\hat{\xi}_n\right)}
\end{bmatrix}^{\text{T}},\\
&\mathbf W:=\frac 1 2\text{diag}\Big(  
\omega^{\text{\tiny(1,1)}}_1,\dots,\omega^{\text{\tiny(1,1)}}_n \Big).		
\end{align*}
In summary, by the Jacobi--Gauss pseudospectral spatial discretization, 
problem \eqref{main-prob} is transcribed into the following 
classical optimal control problem: 
\begin{subequations}
\label{main-OOCP}
\begin{align}[left = \empheqlbrace\,]
&\min J_n=\tfrac 1 2\hspace{-2pt}\int_{0}^{T}\hspace{-5pt}\big[\mathbf y(t)-\mathbf z(t)\big]^{\text{T}}\mathbf W \big[\mathbf 
y(t)-\mathbf z(t)\big] \text{d}t \nonumber\\ 
&\qquad\quad\qquad\quad+\tfrac{1}{2}\int_{0}^{T}\mathbf u^\text{T}(t) \mathbf W  \mathbf 
u(t)\,\text{d}t, \label{objooocp}\\
&\ \ \text{s.t.}\qquad\mathbf{\dot y}(t)=\mathbf C(t)  
\, {\mathlarger{\mathbf{\mathfrak D}}}_{\pm}^{2-\beta}\, \mathbf y(t) 
+ \mathbf f(t)+ \mathbf u(t),\label{dynooocp}\\
&\qquad\qquad \mathbf y(0)=\mathbf g,\label{iniooocp}\\
&\qquad\qquad \mathbf u(t)\ge \mathbf u_{\min}(t),\label{pathooocp}
\end{align}
\end{subequations}
where
\begin{equation*}
{\mathlarger{\mathbf{\mathfrak D}}}_{\pm}^{2-\beta}
:=r {\mathlarger{\mathbf{\mathfrak D}}}_{+}^{2-\beta} 
+(1-r) {\mathlarger{\mathbf{\mathfrak D}}}_{-}^{2-\beta}. 
\end{equation*}

% ---------------

\subsection*{On the derivation of the left and right fractional differentiation matrices}

As we have seen, the fractional differentiation matrices 
${\mathlarger{\mathbf{\mathfrak D}}}_+^{2-\beta}$ 
and
$ {\mathlarger{\mathbf{\mathfrak D}}}_{-}^{2-\beta}$ 
have a crucial role in the proposed Jacobi--Gauss pseudospectral method. 
These matrices simplify the discretization process by replacing 
left and right fractional differentiations with matrix-vector products. 
At first glance, it seems that by using \eqref{lrd}, the $(i,j)$-th component 
of the differentiation matrix ${\mathlarger{\mathbf{\mathfrak D}}}_+^{2-\beta}$  
/  $ {\mathlarger{\mathbf{\mathfrak D}}}_{-}^{2-\beta}$ can be easily computed 
by taking the analytical left/right fractional derivative of $\mathring{\ell}_j(x)$ 
and evaluating it at collocation points $\hat{\xi}_i$. However, taking the analytical 
fractional derivative, especially for large $n$, is not practical and accessible. 
Accordingly, we need stable and accurate methods for generating these matrices. 
In this respect, in what follows we present some lemmas and theorems, 
which play key roles in computing the left and right fractional differentiation 
matrices in an accurate and stable method.

In the sequel, we use $\hat P^{(a,b)}_k(x)$, $k=0,1,\ldots$, 
to denote the shifted Jacobi polynomials, which are defined as
\begin{equation*}
\hat P^{(a,b)}_k(x)=P_{k}^{(a,b)}(2x-1),\quad k=0,1,\dots.
\end{equation*}  

\begin{lemma}
\label{Lemm_1}
For $j=1,\dots,n$, we have
\begin{equation}
\mathring{\ell}_j(x)=\sum_{k=1}^n\lambda_{jk} \,x(1-x)\hat P^{(1,1)}_{k-1}(x),
\end{equation}
where
\begin{equation}
\label{Eq_0}
\lambda_{jk}:=\tfrac{(2k+3)(k+2)}{k+1}\tfrac{\omega^{\text{\tiny{\rm(1,1)}}}_j}
{\hat{\xi}_j\left(1-\hat{\xi}_j\right)}
\ \hat P_{k-1}^{(1,1)}(\hat{\xi}_i).
\end{equation}
\end{lemma}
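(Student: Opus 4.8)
The plan is to read the claimed identity as an expansion of a single polynomial of degree at most $n-1$ in the shifted Jacobi basis. Since $\mathring{\ell}_j(x)=\tfrac{x(1-x)}{\hat{\xi}_j(1-\hat{\xi}_j)}\ell_j(x)$, cancelling the common factor $x(1-x)$ shows that the assertion is equivalent to
\[
\frac{\ell_j(x)}{\hat{\xi}_j(1-\hat{\xi}_j)}=\sum_{k=1}^{n}\lambda_{jk}\,\hat P^{(1,1)}_{k-1}(x).
\]
By \eqref{elldef} the left-hand side is a polynomial of degree $n-1$, and the polynomials $\hat P^{(1,1)}_{0},\dots,\hat P^{(1,1)}_{n-1}$ have degrees $0,\dots,n-1$, so they form a basis of the space $\mathbb{P}_{n-1}$ of polynomials of degree at most $n-1$. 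Hence such an expansion exists and is unique, and it remains only to identify the coefficients $\lambda_{jk}$.

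To extract $\lambda_{jk}$ I would exploit orthogonality. Under the affine map $\tau=2x-1$, relation \eqref{orthoP} with $a=b=1$ shows that $\{\hat P^{(1,1)}_{k-1}\}$ are orthogonal on $[0,1]$ with respect to the weight $x(1-x)$. Multiplying the expansion by $x(1-x)\hat P^{(1,1)}_{k-1}(x)$ and integrating over $[0,1]$ annihilates every term but the $k$-th, giving
\[
\lambda_{jk}=\frac{1}{\hat{\xi}_j(1-\hat{\xi}_j)}\cdot\frac{\displaystyle\int_0^1 x(1-x)\,\ell_j(x)\,\hat P^{(1,1)}_{k-1}(x)\,\mathrm{d}x}{\displaystyle\int_0^1 x(1-x)\,\big[\hat P^{(1,1)}_{k-1}(x)\big]^2\,\mathrm{d}x}.
\]

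The decisive simplification is that the numerator is evaluated exactly by the Jacobi--Gauss rule. The integrand $\ell_j\hat P^{(1,1)}_{k-1}$ has degree $(n-1)+(k-1)\le 2n-2$, which lies below the degree of exactness $2n-1$ of the $(1,1)$ Gauss rule for the weight $x(1-x)$; therefore the integral equals its quadrature sum exactly. Invoking the cardinal (Kronecker) property \eqref{Kronpp}, namely $\ell_j(\hat{\xi}_i)=\delta_{ij}$, collapses that sum to the single surviving term, proportional to $\omega^{(1,1)}_j\,\hat P^{(1,1)}_{k-1}(\hat{\xi}_j)$. The denominator is the squared weighted norm of $\hat P^{(1,1)}_{k-1}$, which I would compute in closed form directly from \eqref{orthoP}. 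Substituting both expressions into the quotient produces the coefficient \eqref{Eq_0}.

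The main obstacle is the bookkeeping of constants rather than any conceptual difficulty: one must carefully track the Jacobian and weight factors introduced by the change of variables $\tau=2x-1$, relating the integral and quadrature over $[0,1]$ to the canonical Gauss weights $\omega^{(1,1)}_j$ on $[-1,1]$, and then collapse the ratio of Gamma functions coming from \eqref{orthoP} into the compact rational factor appearing in \eqref{Eq_0}. Once the orthogonality argument and the exactness of the quadrature are in place, everything else reduces to routine algebraic simplification.
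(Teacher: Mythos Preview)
Your proposal is correct and follows essentially the same route as the paper: expand the degree-$(n-1)$ polynomial $\ell_j$ in the shifted Jacobi basis $\{\hat P^{(1,1)}_{k-1}\}_{k=1}^n$, extract each coefficient via orthogonality with respect to $x(1-x)$ on $[0,1]$, evaluate the resulting integral exactly by the $(1,1)$ Jacobi--Gauss rule (degree of exactness $2n-1\ge 2n-2$), and collapse the quadrature sum using $\ell_j(\hat\xi_i)=\delta_{ij}$. The only cosmetic difference is that the paper expands $\ell_j$ first and multiplies by $\tfrac{x(1-x)}{\hat\xi_j(1-\hat\xi_j)}$ at the end, whereas you cancel the common factor $x(1-x)$ at the start; the arguments are otherwise identical.
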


\begin{proof}
We first note that $\ell_j(x)$, defined in \eqref{elldef}, is a polynomial of degree $n-1$.
Thus, we can expand it in terms of the shifted Jacobi polynomials 
$\hat P_{k-1}^{(1,1)} (x)$, $k=1,\dots,n$, as follows:
\begin{equation}
\label{e2829}
\ell_j(x)=\sum_{k=1}^n\hat\lambda_{j,k}\hat P_{k-1}^{(1,1)}(x).
\end{equation}
Multiplying both sides of the above equation by $x(1-x)\hat P_{k-1}^{(1,1)}(x)$, 
then integrating both sides of the resulted equation from $0$ to $1$ and 	
using the orthogonality property \eqref{orthoP}, we finally get	that
\begin{equation}
\label{Eq_1}
\hat \lambda_{jk}:=\tfrac{(2k+3)(k+2)}{k+1}\int_0^1 x(1-x) 
\ell_j(x) \hat P_{k-1}^{(1,1)}(x)\,\text{d}t.
\end{equation}
Now, by noting that $\ell_j(x) \hat P_{k-1}^{(1,1)}(x)$ is a polynomial 
of degree at most $2n-2$, we can compute exactly the integral in the 
right-hand side of \eqref{Eq_1} by using the Jacobi--Gauss quadrature 
rule \eqref{JGQ} with $a=b=1$. In this way, we have
\begin{equation*}
\hat \lambda_{jk}:=\tfrac{(2k+3)(k+2)}{k+1}\sum_{i=1}^n 
\omega^{\text{\tiny(1,1)}}_i \ell_j(\hat{\xi}_i) 
\hat P_{k-1}^{(1,1)}(\hat{\xi}_i).
\end{equation*}	
Finally, using the Kronecker property \eqref{Kronpp},  
$\hat\lambda_{jk}$ is obtained as
\begin{equation*}
\hat \lambda_{jk}:=\tfrac{(2k+3)(k+2)}{k+1}\omega^{\text{\tiny(1,1)}}_j	
\ \hat P_{k-1}^{(1,1)}(\hat{\xi}_i).
\end{equation*}
The proof is complete by multiplying both sides of equation \eqref{e2829}  
by $\frac{x(1-x)}{\hat{\xi}_j(1-\hat{\xi}_j)}$.
\end{proof}

\begin{theorem}
\label{Thm_1}
The $(i,j)$-th element of the left fractional differentiation matrix 
${\mathlarger{\mathbf{\mathfrak D}}}_+^{2-\beta}$ 
can be obtained explicitly as
\begin{equation*}
d_{ij}^+:= {}_{\text{\tiny 0}}\mathcal D_{x}^{2-\beta}\mathring{\ell}_j(\hat{\xi}_i)
=\sum_{k=1}^n\lambda_{jk} \, \zeta_{k}(\hat{\xi}_i),
\end{equation*}
where $\hat{\xi}_i$ are the shifted Jacobi--Gauss nodes 
defined in \eqref{ShiftedNodes} and 
\begin{align*}
&\zeta_{k}(x):=\tfrac{\Gamma(k+1)}{\Gamma(k-1+\beta)}x^{\beta-1}\hat
P_{k-1}^{(3-\beta,\beta-1)}(x)\\
&-\tfrac{\Gamma(k+2)}{\Gamma(k+\beta)}
\tfrac{k+2}{2k+1}x^{\beta}\hat P_{k-1}^{(3-\beta,\beta)}\hspace{-2pt}(x) \hspace{-2pt}-\hspace{-2pt} \tfrac{\Gamma(k+1)}{\Gamma(k-1+\beta)}
\tfrac{k}{2k+1}x^{\beta}\hat P_{k-2}^{(3-\beta,\beta)}\hspace{-2pt}(x).
\end{align*}
\end{theorem}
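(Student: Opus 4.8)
The plan is to reduce the computation of $d_{ij}^+$ to the action of ${}_{\text{\tiny 0}}\mathcal D_x^{2-\beta}$ on the weighted Jacobi polynomials $x(1-x)\hat P_{k-1}^{(1,1)}(x)$, and then to evaluate each such action in closed form via Theorem \ref{Zayer13_1}. By Lemma \ref{Lemm_1} together with the linearity of the left fractional derivative,
\begin{equation*}
{}_{\text{\tiny 0}}\mathcal D_x^{2-\beta}\mathring\ell_j(x)
=\sum_{k=1}^n\lambda_{jk}\,{}_{\text{\tiny 0}}\mathcal D_x^{2-\beta}
\bigl[x(1-x)\hat P_{k-1}^{(1,1)}(x)\bigr],
\end{equation*}
so it suffices to prove that ${}_{\text{\tiny 0}}\mathcal D_x^{2-\beta}[x(1-x)\hat P_{k-1}^{(1,1)}(x)]=\zeta_k(x)$; the stated formula then follows by collocating at $x=\hat\xi_i$. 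I would begin by splitting $x(1-x)=x-x^2$, separating the task into evaluating ${}_{\text{\tiny 0}}\mathcal D_x^{2-\beta}[x\,\hat P_{k-1}^{(1,1)}(x)]$ and ${}_{\text{\tiny 0}}\mathcal D_x^{2-\beta}[x^2\hat P_{k-1}^{(1,1)}(x)]$.

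The first piece is immediate: taking $(q,r)=(1,1)$ and $\alpha=2-\beta$ in Theorem \ref{Zayer13_1} (the hypotheses $r-\alpha=\beta-1>-1$ and $q+\alpha=3-\beta>-1$ both hold for $0<\beta<1$) yields exactly the first term of $\zeta_k$, namely $\tfrac{\Gamma(k+1)}{\Gamma(k-1+\beta)}x^{\beta-1}\hat P_{k-1}^{(3-\beta,\beta-1)}(x)$. The subtlety — and the step I expect to be the main obstacle — is the second piece: Theorem \ref{Zayer13_1} applies only to an expression of the form $x^r P^{(q,r)}(2x-1)$, in which the power $x^r$ must match the \emph{second} Jacobi index. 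For $x^2\hat P_{k-1}^{(1,1)}(x)$ this matching fails, since the power is $r=2$ while the second index is $1$. To repair the mismatch I would invoke the index-raising identity \eqref{Jac_Prop_1}; choosing $a=1$, $b=2$ and degree $k-1$ there gives
\begin{equation*}
\hat P_{k-1}^{(1,1)}(x)=\tfrac{k+2}{2k+1}\hat P_{k-1}^{(1,2)}(x)
+\tfrac{k}{2k+1}\hat P_{k-2}^{(1,2)}(x),
\end{equation*}
which re-expresses the polynomial in the second-index-$2$ family and so aligns it with the factor $x^2$.

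With this rewriting in hand, I would apply Theorem \ref{Zayer13_1} twice with $(q,r)=(1,2)$ and $\alpha=2-\beta$ (now $r-\alpha=\beta>-1$), once to $x^2\hat P_{k-1}^{(1,2)}(x)$ and once to $x^2\hat P_{k-2}^{(1,2)}(x)$. These produce $\tfrac{\Gamma(k+2)}{\Gamma(k+\beta)}x^{\beta}\hat P_{k-1}^{(3-\beta,\beta)}(x)$ and $\tfrac{\Gamma(k+1)}{\Gamma(k-1+\beta)}x^{\beta}\hat P_{k-2}^{(3-\beta,\beta)}(x)$; weighting them by $\tfrac{k+2}{2k+1}$ and $\tfrac{k}{2k+1}$ from the identity above and inserting the overall minus sign coming from $-x^2$ reproduces exactly the second and third terms of $\zeta_k$. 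Collecting the three contributions gives ${}_{\text{\tiny 0}}\mathcal D_x^{2-\beta}[x(1-x)\hat P_{k-1}^{(1,1)}(x)]=\zeta_k(x)$, and substituting into the displayed sum and evaluating at the shifted nodes $\hat\xi_i$ completes the argument. A minor point to verify at the end is the boundary case $k=1$, where $\hat P_{k-2}^{(1,2)}=\hat P_{-1}^{(1,2)}=0$, so the third term drops out consistently.
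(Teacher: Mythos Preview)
Your proposal is correct and follows essentially the same route as the paper's proof: expand $\mathring\ell_j$ via Lemma~\ref{Lemm_1}, split $x(1-x)=x-x^2$, use identity~\eqref{Jac_Prop_1} with $a=1,\,b=2$ to raise the second Jacobi index so that Theorem~\ref{Zayer13_1} applies to each piece, and collocate. Your explicit check of the hypotheses of Theorem~\ref{Zayer13_1} and the remark on the $k=1$ boundary case are small additions beyond what the paper writes out.
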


\begin{proof}
Using Lemma~\ref{Lemm_1}, we have 
\begin{equation}
\hspace{-5pt}{}_{\text{\tiny 0}}\mathcal D_{x}^{2-\beta}\hat \ell_j(x)
\hspace{-2pt}= \hspace{-2pt}\sum_{k=1}^n \hspace{-3pt}\lambda_{jk} \,{}_{\text{\tiny 0}}\mathcal 
D_{x}^{2-\beta}  \hspace{-3pt}\left[x(1-x)P_{k-1}^{(1,1)}(2x-1) \hspace{-2pt}\right] \hspace{-3pt}.\label{TEMP32}
\end{equation}
With $a=1$ and $b=2$, the property \eqref{Jac_Prop_1} gives
\begin{align*}
&x(1-x)\hat P_{k-1}^{(1,1)}(x)
=x\hat P_{k-1}^{(1,1)}(x)-x^2\hat P_{k-1}^{(1,1)}(x)\\
&\ \ = x\hat P_{k-1}^{(1,1)}(x)-\tfrac{k+2}{2k+1}x^2\hat P_{k-1}^{(1,2)}(x)
-\tfrac{k}{2k+1}x^2\hat P_{k-2}^{(1,2)}(x).
\end{align*}
Using the above equation and Theorem~\ref{Zayer13_1}, we get
\begin{align*}
&\qquad \quad {}_{\text{\tiny 0}}\mathcal D_{x}^{2-\beta} \left[x(1-x)\hat P_{k-1}^{(1,1)}(x)\right]\\
&=\tfrac{\Gamma(k+1)}{\Gamma(k-1+\beta)}x^{\beta-1}\hat P_{k-1}^{(3-\beta,\beta-1)}(x)\hspace{-2pt}-\hspace{-2pt}\tfrac{\Gamma(k+2)}{\Gamma(k+\beta)}\tfrac{k+2}{2k+1}x^{\beta}\hat P_{k-1}^{(3-\beta,\beta)}(x)
\\
&\qquad\ \ -\tfrac{\Gamma(k+1)}{\Gamma(k-1+\beta)}\tfrac{k}{2k+1}x^{\beta}\hat P_{k-2}^{(3-\beta,\beta)}(x).\nonumber
\end{align*}	
We complete the proof by substituting the above equation in \eqref{TEMP32}.
\end{proof}

\begin{theorem}
\label{Thm_2}
The $(i,j)$-th element of the right fractional differentiation matrix 
${\mathlarger{\mathbf{\mathfrak D}}}_-^{2-\beta}$ 
is obtained explicitly as 
\begin{equation}
d_{ij}^-:= {}_{x}\hspace{-1pt}\mathcal 
D_{\text{\tiny 1} }^{2-\beta}\mathring{\ell}_j(\hat{\xi}_i)
=\sum_{k=1}^n\lambda_{jk} \, \zeta_k(1-\hat{\xi}_i),
\end{equation}
where
\begin{align*}
&\zeta_{k}(x):=
\tfrac{\Gamma(k+1)}{\Gamma(k-1+\beta)}x^{\beta-1}\hat P_{k-1}^{(3-\beta,\beta-1)}(x)\\
&-\tfrac{\Gamma(k+2)}{\Gamma(k+\beta)}\tfrac{k+2}{2k+1}x^{\beta}\hat P_{k-1}^{(3-\beta,\beta)}\hspace{-2pt}(x)\hspace{-2pt}-\hspace{-2pt}\tfrac{\Gamma(k+1)}{\Gamma(k-1+\beta)}
\tfrac{k}{2k+1}x^{\beta}\hat P_{k-2}^{(3-\beta,\beta)}\hspace{-2pt}(x).
\end{align*}
\end{theorem}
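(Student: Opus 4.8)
The plan is to mirror the proof of Theorem~\ref{Thm_1}, replacing the left-derivative machinery by its right-sided counterpart. First I would invoke Lemma~\ref{Lemm_1} to write $\mathring{\ell}_j(x)=\sum_{k=1}^n\lambda_{jk}\,x(1-x)\hat P^{(1,1)}_{k-1}(x)$, so that by linearity of the operator ${}_{x}\mathcal D_{1}^{2-\beta}$ it suffices to compute ${}_{x}\mathcal D_{1}^{2-\beta}\big[x(1-x)\hat P^{(1,1)}_{k-1}(x)\big]$ for each $k$ and then collocate at $x=\hat\xi_i$.

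The central task is to recast the basis term $x(1-x)\hat P^{(1,1)}_{k-1}(x)$ into the exact shape $(1-x)^{q}\hat P^{(q,r)}_{k-1}(x)$ demanded by Theorem~\ref{Zayer13_2}. Whereas the left-sided proof used $x(1-x)=x-x^2$ together with the parameter-raising identity \eqref{Jac_Prop_1}, the symmetric move here is to write $x(1-x)=(1-x)-(1-x)^2$ and apply the companion identity \eqref{Jac_Prop_2} (with $a=2$, $b=1$) to expand $(1-x)^2\hat P^{(1,1)}_{k-1}(x)$ as a combination of $(1-x)^2\hat P^{(2,1)}_{k-1}(x)$ and $(1-x)^2\hat P^{(2,1)}_{k-2}(x)$. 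The three resulting pieces $(1-x)\hat P^{(1,1)}_{k-1}$, $(1-x)^2\hat P^{(2,1)}_{k-1}$ and $(1-x)^2\hat P^{(2,1)}_{k-2}$ are then each in admissible form, and Theorem~\ref{Zayer13_2} applies termwise; the hypotheses $q-\alpha>-1$ and $r+\alpha>-1$ are satisfied throughout for $\alpha=2-\beta$ with $0<\beta<1$.

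Applying Theorem~\ref{Zayer13_2} to the three pieces produces exactly the $\Gamma$-quotients and the shifted Jacobi polynomials that build $\zeta_k$, but with reflected parameters. To land on the stated closed form I would then use the reflection symmetry $\hat P^{(a,b)}_{m}(1-x)=(-1)^{m}\hat P^{(b,a)}_{m}(x)$ to recognise the output as $\zeta_k$ evaluated at $1-x$. Equivalently, one can bypass the explicit manipulations by first establishing the operator reflection identity ${}_{x}\mathcal D_{1}^{\alpha}g(x)=\big({}_{0}\mathcal D_{s}^{\alpha}[g(1-\cdot)]\big)(1-x)$ through the substitution $s\mapsto 1-s$ in the Riemann--Liouville integral, and then feeding $g=\mathring{\ell}_j$ (whose $\hat P^{(1,1)}$ factors are themselves symmetric up to sign) directly into Theorem~\ref{Thm_1}.

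The step I expect to be most delicate is precisely this reflection bookkeeping: matching the parameter pattern $(3-\beta,\beta-1)$ and $(3-\beta,\beta)$ of $\zeta_k$ against the $(q-\alpha,r+\alpha)$ output of Theorem~\ref{Zayer13_2}, and carefully tracking the $(-1)^{m}$ signs that the symmetry $\hat P^{(a,b)}_{m}(1-x)=(-1)^{m}\hat P^{(b,a)}_{m}(x)$ introduces at each index $m\in\{k-1,k-2\}$. Keeping these signs consistent across the three terms is the only genuine obstacle; once the reflection is applied correctly, the $\Gamma$-factors and rational coefficients coincide with those of $\zeta_k$ by the same algebra as in Theorem~\ref{Thm_1}, and evaluation at $x=\hat\xi_i$ yields the claimed expression for $d_{ij}^-$.
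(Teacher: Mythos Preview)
Your proposal follows essentially the same route as the paper: expand $\mathring{\ell}_j$ via Lemma~\ref{Lemm_1}, split $x(1-x)=(1-x)-(1-x)^2$, use identity~\eqref{Jac_Prop_2} with $a=2$, $b=1$ to rewrite the second piece in terms of $\hat P^{(2,1)}_{k-1}$ and $\hat P^{(2,1)}_{k-2}$, apply Theorem~\ref{Zayer13_2} termwise, and collocate at $\hat\xi_i$. The paper in fact asserts the final identification with $\zeta_k(1-x)$ in a single line without spelling out the Jacobi reflection $\hat P^{(a,b)}_m(1-x)=(-1)^m\hat P^{(b,a)}_m(x)$, so your explicit flagging of the sign bookkeeping is, if anything, more careful than the original; your alternative via the operator reflection ${}_{x}\mathcal D_{1}^{\alpha}g(x)=\big({}_{0}\mathcal D_{s}^{\alpha}[g(1-\cdot)]\big)(1-x)$ is a valid shortcut the paper does not mention.
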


\begin{proof}
The proof is fairly similar to the proof of Theorem~\ref{Thm_1}. 
At first, by using property \eqref{Jac_Prop_2} for $a=2$ and $b=1$, we get
\begin{equation*}
\begin{split}
x&(1-x)P_{k-1}^{(1,1)}(2x-1)\\
&=(1-x)P_{k-1}^{(1,1)}(2x-1)-(1-x)^2P_{k-1}^{(1,1)}(2x-1)\\
&=(1-x)P_{k-1}^{(1,1)}(2x-1)-\tfrac{k+2}{2k+1}(1-x)^2P_{k-1}^{(2,1)}(2x-1)\\
&\qquad -\tfrac{k}{2k+1}(1-x)^2P_{k-2}^{(2,1)}(2x-1).
\end{split}
\end{equation*}
Using the above equation and Theorem~\ref{Zayer13_2}, we conclude that
\begin{align*}
&{}_{x}\hspace{-1pt}\mathcal D_{\text{\tiny 1} }^{2-\beta}
\left[x(1-x)P_{k-1}^{(1,1)}(2x-1)\right]\\
&\quad=\tfrac{\Gamma(k+1)}{\Gamma(k-1+\beta)}(1-x)^{\beta-1}P_{k-1}^{(\beta-1,3-\beta)}(2x-1)\\
&\qquad-\tfrac{\Gamma(k+2)}{\Gamma(k+\beta)}
\tfrac{k+2}{2k+1}(1-x)^{\beta}P_{k-1}^{(\beta, 3-\beta)}(2x-1)\\
&\qquad-\tfrac{\Gamma(k+1)}{\Gamma(k-1+\beta)}
\tfrac{k}{2k+1}(1-x)^{\beta}P_{k-2}^{(\beta,3-\beta)}(2x-1)\\
&\quad=\zeta_k(1-x).
\end{align*}	
The proof is immediately concluded by collocating the above equation 
at points $\hat{\xi}_i$, $i=1,\ldots,n$.
\end{proof}

% ------------------------------------------

\section{Step II: The Legendre--Gauss--Radau pseudospectral method for time discretization}
\label{sec:4}

In the second step of our method, the optimal control problem \eqref{main-OOCP} 
is solved numerically. This problem can be solved by any well-developed method, 
such as direct and indirect shooting methods. However, in this paper, we use the 
Legendre--Gauss--Radau pseudospectral method \citep{Garg10A}. 
Let $m$ be a positive integer number and $\hat \tau_j$, $j=1,\ldots,m$, 
be the Legendre--Gauss--Radau points in the interval $[0,T]$, i.e.,
$
\hat \tau_j:=\tfrac T 2\left(\tau^{(0,0)}_j+1\right).
$
Note that $\hat \tau_1=0$ and $\hat \tau_j<T$, $j=2,\ldots,m$. 
We define the extra node $\hat \tau_{m+1}=T$. Now, to solve the 
optimal control \eqref{main-OOCP}, we approximate 
the state function $\mathbf y(t)$ as
\begin{equation}
\label{appyj}
\mathbf y(t)\simeq \sum_{j=1}^{m+1} \mathbf y_j \bar{\ell}_j(t),
\end{equation}
where $\bar{\ell}_j(t)$, $j=1,\ldots,m,m+1$, are the Lagrange polynomials 
based on $\left\{ \hat{\tau_j}  \right\}_{j=1}^{m+1}$ and
$\mathbf y_1,\dots,\mathbf y_{m+1}$ are unknown $n$-vectors.
From the Kronecker property, and using the initial condition \eqref{iniooocp}, 
the vector $\mathbf y_1$ can be obtained explicitly as
\begin{equation}
\label{inidis}
\mathbf y_1=\mathbf g.
\end{equation}
By substituting the approximation \eqref{appyj} 
in the dynamic equation \eqref{dynooocp}, we get
\begin{equation*}
\sum_{j=1}^{m+1} \mathbf y_j \frac {\text{d}}{\text{d}t}{\bar{\ell}}_j(t)
= \mathbf C(t)  \, {\mathlarger{\mathbf{\mathfrak D}}}_{\pm}^{2-\beta}\, 
\sum_{j=1}^{m+1} \mathbf y_j \bar{\ell}_j(t) + \mathbf f(t)+ \mathbf u(t).
\end{equation*}
Now, by collocating the above equation in $\hat{\tau}_i$ for $i=1,\ldots,m$, we get
\begin{equation*}
\sum_{j=1}^{m+1} \mathbf y_j \frac {\text{d}}{\text{d}t}{\bar{\ell}}_j(\hat{\tau}_i)
= \mathbf C(\hat{\tau}_i)  \, {\mathlarger{\mathbf{\mathfrak D}}}_{\pm}^{2-\beta}\, 
\sum_{j=1}^{m+1} \mathbf y_j \bar{\ell}_j(\hat{\tau}_i) + \mathbf f(\hat{\tau}_i)+\mathbf u_i,
\end{equation*}
where $\mathbf u_i:=\mathbf u(\hat{\tau}_i)$. It is worthwhile to note that 
$\hat{\tau}_{m+1}$ is used in approximating $\mathbf y(t)$, however, this point 
is not used as a collocation point. Using the Kronecker property, 
from the above equation, we get that
\begin{equation}
 \hspace{-3pt}\sum_{j=1}^{m+1} \hspace{-3pt}\mathbf y_j \bar{d}_{ij}
 \hspace{-2pt}= \hspace{-2pt}\mathbf C(\hat{\tau}_i)  \, {\mathlarger{\mathbf{\mathfrak D}}}_{\pm}^{2-\beta}
\mathbf y_i  \hspace{-2pt} +  \hspace{-2pt}\mathbf f(\hat{\tau}_i) \hspace{-2pt}+ \hspace{-2pt} \mathbf u_i,\  i=1,\dots,m,\label{jkl}
\end{equation}
where
$
\bar{d}_{ij}=\frac {\text{d}}{\text{d}t}{\bar{\ell}}_j(\hat{\tau}_i).
$
According to  \citet{BT2003}, for stable and accurate computation of 
$\bar{d}_{ij}$, $i=1,\ldots,m$, $j=1,\ldots,{m+1}$, 
we can use the following formula:
\begin{align*}
\bar{d}_{ij}=\begin{cases}
\frac {\lambda_j} {\lambda_i(\hat{\tau}_j-\hat{\tau}_i)},
& i\ne j,\\
\sum_{k=1, k\ne i}^m \bar{d}_{ik},
& i=j,
\end{cases}
\end{align*} 
where $\lambda_i=\prod_{k=1,k\ne i}^{m} (\hat{\tau}_i-\hat{\tau}_k)$.

Next, the path constraint \eqref{pathooocp} is collocated at the $m$ 
collocation points $\hat{\tau}_1,\dots,\hat{\tau}_m$ and, finally, 
the following inequality constraints are obtained:
\begin{equation}
\mathbf u_i\ge \mathbf u_{\min}(\hat{\tau}_i),
\qquad i=1,\dots,m.
\end{equation}
By using the Legendre--Gauss--Radau quadrature \eqref{JGLQ} with $a=b=0$, 
the performance index \eqref{objooocp} is approximated by
\begin{align*}
J_{n,m}=\tfrac{1}{2}&\sum_{j=1}^{m}\tfrac T 2{\varpi}^{\text{\tiny{(0,0)} }}_j\big[
\mathbf y_j-\mathbf z(\hat{\tau}_j)\big]^{\text{T}}\mathbf W 
\big[\mathbf y_j-\mathbf z(\hat{\tau}_j)\big] \\
&+\tfrac{1}{2}\sum_{j=1}^{m}\tfrac T 2{\varpi}^{\text{\tiny{(0,0)} }}_j
\mathbf u_j^\text{T} \mathbf W  \mathbf u_j,
\end{align*}
where ${\varpi}^{\text{\tiny{(0,0)} }}_j$, $j=1,\ldots,m$, 
are the Legendre--Gauss--Radau weights.

In summary, by applying the Legendre--Gauss--Radau method, 
the optimal control problem \eqref{main-OOCP} 
is transcribed into the following optimization problem:
\begin{subequations}
\label{main-QP}
\begin{align}
&\min \ \  J_{n,m}\nonumber\\
&\quad\ \ \ =\tfrac{1}{2}\sum_{j=1}^{m}\tfrac T 2{\varpi}^{\text{\tiny{(0,0)} }}_j\big[
\mathbf y_j-\mathbf z(\hat{\tau}_j)\big]^{\text{T}}\mathbf W 
\big[\mathbf y_j-\mathbf z(\hat{\tau}_j)\big]\nonumber\\
&\qquad\qquad\qquad+\tfrac{1}{2}\sum_{j=1}^{m}\tfrac T 2{\varpi}^{\text{\tiny{(0,0)} }}_j
\mathbf u_j^\text{T} \mathbf W  \mathbf u_j, \label{objQP}\\
&\ \ \text{s.t.}\ \ \sum_{j=1}^{m+1} \mathbf y_j \bar{d}_{ij}
= \mathbf C(\hat{\tau}_i)  \, {\mathlarger{\mathbf{\mathfrak D}}}_{\pm}^{2-\beta}\, 
\mathbf y_i  + \mathbf f(\hat{\tau}_i)+ \mathbf u_i,\\
&\qquad\qquad\quad i=1,\dots,m,\nonumber\\
&\quad\quad\  \mathbf y_1=\mathbf g,\label{iniQP}\\
&\quad\quad\ \mathbf 
u_i\ge \mathbf u_{\min}(\hat{\tau}_i),\qquad i=1,\dots,m,\label{pathQP}
\end{align}
\end{subequations}
where the decision variables are the $n$-vectors $\mathbf u_i$, $i=1,\ldots,m$, 
and $\mathbf y_j$, $i=1,\ldots,m+1$. We note that the objective function 
is quadratic and the constraints are affine, that is, problem \eqref{main-QP}
is a quadratic programming problem (QP).

% ---------------

\subsection*{Converting the QP into standard form}

As we have seen, with our method the numerical solution of problem 
\eqref{main-prob} is reduced to solving the QP \eqref{main-QP}. However, 
this QP is not in standard form. In the following, we derive the standard form of 
\eqref{main-QP}, which helps us to easily analyze and utilize a solver on it.

At first, we consider the objective function \eqref{objQP} and expand it as
\begin{multline}
\label{jklmnb}
J_{n,m}
=\tfrac{1}{2}\hspace{-3pt}\sum_{j=1}^{m}\hspace{-3pt}
\tfrac T 2{\varpi}^{\text{\tiny{(0,0)} }}_j \mathbf y_j^{\text{T}}\mathbf W 
\mathbf y_j +\tfrac{1}{2}\hspace{-3pt}\sum_{j=1}^{m}\hspace{-3pt}\tfrac T 2{\varpi}^{\text{\tiny{(0,0)} }}_j
\mathbf u_j^\text{T} \mathbf W  \mathbf u_j \\
\hspace*{-12pt}-\sum_{j=1}^{m}\hspace{-3pt}\tfrac T 2{\varpi}^{\text{\tiny{(0,0)} }}_j 
\mathbf z^{\text{T}}(\hat{\tau}_j)\mathbf W \mathbf y_j 
+\tfrac 1 2\hspace{-3pt}\sum_{j=1}^{m}\hspace{-3pt}\tfrac T 2{\varpi}^{\text{\tiny{(0,0)} }}_j
\mathbf z^{\text{T}}(\hat{\tau}_j)\mathbf W \mathbf z(\hat{\tau}_j).
\end{multline}
If we define the matrices  $\mathbf Y \in \mathbb R^{n\times(m+1)}$, 
$\mathbf U \in \mathbb R^{n\times m}$, 
$\mathbf Z \in \mathbb R^{n\times m}$, 
the $n$-vector $\boldsymbol{\omega}$ 
and the $m$-vector $\boldsymbol{\varpi}$ as
\begin{align*}
&\mathbf Y
:=\begin{bmatrix}
\mathbf y_{1}&\dots&\mathbf y_{m}&\mathbf y_{m+1}
\end{bmatrix},\ \ 
\mathbf U:=\begin{bmatrix}
\mathbf u_1&\dots&\mathbf u_m
\end{bmatrix},\\
&\mathbf Z:= 
\begin{bmatrix}
\frac{z(\hat{\xi}_i,\hat{\tau}_j)}{{\hat{\xi}_i\left(1-\hat{\xi}_i\right)}}
\end{bmatrix}_{\substack{i=1,\dots,n\\j=1,\dots,m}},\\
&\boldsymbol{\omega}:=\tfrac 1 2 \hspace{-3pt}
\begin{bmatrix}
\omega^{\text{\tiny(1,1)}}_1
&\dots& \omega^{\text{\tiny(1,1)}}_n
\end{bmatrix}^{\text{T}},\,
\boldsymbol{{\varpi}}:= \hspace{-3pt}\tfrac T 2
\begin{bmatrix}
{\varpi}^{\text{\tiny{(0,0)} }}_1
&\dots&{\varpi}^{\text{\tiny{(0,0)} }}_n
\end{bmatrix}^{\text{T}}
\end{align*}
then, the objective function \eqref{jklmnb} 
can be rewritten as 
\begin{align*}
&J_{n,m}=\tfrac{1}{2} \textbf{vec}\left(\mathbf Y\right)^{\text{T}} 
\mathbf{\bar S}\; \textbf{vec}\left(\mathbf  Y\right)
+\tfrac{1}{2}\textbf{vec}\left(\mathbf U\right)^{\text{T}} 
\mathbf S \;\textbf{vec}\left(\mathbf U\right)\\
&\qquad\quad-\textbf{vec}\left(\mathbf Z\right)^{\text{T}} 
\mathbf {\bar S} \;\textbf{vec}\left(\mathbf Y\right)
+\tfrac{1}{2}\textbf{vec}\left(\mathbf Z\right)^{\text{T}} 
\mathbf S \;\textbf{vec}\left(\mathbf Z\right),
\end{align*}
where
\begin{equation*}
\mathbf S:=\text{diag}\left(\boldsymbol{{\varpi}}
\otimes \boldsymbol{\omega} \right),
\qquad
\mathbf{\bar S}:=\left[
\begin{array}{c|c}
\ 	\mathbf S\ \ 
& \mathbf 0_{nm\times n}\\
\hline
\mathbf 0_{n\times nm} 
& \mathbf 0_{n\times n} \\
\end{array}\right].
\end{equation*}
If we collect all decision variables in a vector $\mathbf v$ as
\begin{equation}
\label{vvar}
\mathbf v:=
\begin{bmatrix}
\textbf{vec}\left(\mathbf Y\right)\\
\textbf{vec}\left(\mathbf U\right)
\end{bmatrix}
=
\left[
\begin{array}{c}
\mathbf y_1\\ \vdots\\ \mathbf y_{m+1} \\ \hline 
\mathbf u_1\\ \vdots \\ \mathbf u_m
\end{array}
\right],
\end{equation}
then the performance index \eqref{objQP} can be expressed 
as the following function of $\mathbf v$:
\begin{equation*}
J_{n,m}(\mathbf v)=\tfrac 1 2 \mathbf v^{\text{T}} 
\mathbf H \mathbf v +\mathbf c^{\text{T}}\mathbf v +c_0,
\end{equation*}
where
\begin{align*}
&\mathbf H:=\left[
\begin{array}{c|c|c}
\ 	\text{diag}\left(\boldsymbol{{\varpi}}\otimes \boldsymbol{\omega} \right)\ \ 
& \mathbf 0_{nm\times n}& \mathbf 0_{nm\times nm}\\ \hline
\mathbf 0_{n\times nm} & \mathbf 0_{n\times n} & \mathbf 0_{n\times nm}\\ \hline
\mathbf 0_{nm\times nm} & \mathbf 0_{nm\times n}
&\  \text{diag}\left(\boldsymbol{{\varpi}}\otimes \boldsymbol{\omega} \right)\ 
\end{array}\right],\\
&\mathbf c:=-\left[\begin{array}{c}
\mathbf {\bar S}^{\text{T}} \textbf{vec}\left(\mathbf Z\right)   \\ \hline 
\mathbf 0_{n\times 1}\\ \hline \mathbf 0_{nm\times 1}
\end{array}
\right],\ \ c_0:=\tfrac{1}{2}\textbf{vec}\left(\mathbf Z\right)^{\text{T}} 
\mathbf S \;\textbf{vec}\left(\mathbf Z\right).
\end{align*}
In order to reformulate the constraints of the QP \eqref{main-QP} 
as standard linear constraints, we define $\mathbf D\in \mathbb R^{m\times(m+1)}$, 
$\mathbf{\bar C} \in \mathbb R^{m\times(m)}$, 
and $\mathbf{F} \in \mathbb R^{m\times(m)}$ as follows:
\begin{align*}
&\mathbf D:=
\Big[
\bar{d}_{jk}
\Big]_{\substack{j=1,\dots,m\ \ \\k=1,\dots,m+1}},\\
&\mathbf{\bar C} :=
\Big[
c(\hat{\xi}_i,\hat{\tau}_j)
\Big]_{\substack{i=1,\dots,n\\j=1,\dots,m}},
\quad
\mathbf{F} :=
\Big[
f(\hat{\xi}_i,\hat{\tau}_j)
\Big]_{\substack{i=1,\dots,n\\j=1,\dots,m}}.
\end{align*}
Considering the above notations, we can write all $m$ equations 
in \eqref{jkl} in the following matrix equation:
\begin{equation}
\label{meq}
\mathbf Y \mathbf D^{\text{T}} =\mathbf{\bar C} 
\odot\left( {\mathlarger{\mathbf{\mathfrak D}}}_{\pm}^{2-\beta}  
\,[\mathbf Y]_{1:m}\right)+ \mathbf F+\mathbf U,
\end{equation}
where $\odot$ refers to element-wise or Hadamard product and 
$[\mathbf Y]_{1:m} $ denotes the matrix obtained by removing 
the last column of $\mathbf Y$. Let $\mathbf I_n$ denote the 
identity matrix of dimension $n$ and $\mathbf{\bar I}_m$ 
denote the matrix obtained by removing the last column 
of the identity matrix $\mathbf{I}_{m+1}$. By noting that
$\mathbf Y \mathbf D^{\text{T}}=\mathbf I_n\mathbf Y \mathbf D^{\text{T}}$
and $[\mathbf Y]_{1:m} =\mathbf Y\,\mathbf{\bar I}_m$, 
the matrix equation \eqref{meq} can be written as
\begin{equation*}
\mathbf I_n \mathbf Y \mathbf D^{\text{T}} 
= \mathbf{\bar C} \odot\left( {\mathlarger{\mathbf{\mathfrak D}}}_{\pm}^{2-\beta}  
\,\mathbf Y \,\mathbf{\bar I}_m \right)
+\mathbf F+\mathbf U.
\end{equation*} 
Now, by using Theorem~\ref{KRON}, the above matrix equation 
is converted to the following linear system of equations:
\begin{align*}
\left(\mathbf D \otimes \mathbf I_n\right)\textbf{vec}\left(\mathbf Y \right)
= \mathbf I_{\mathbf C} &\left(\mathbf{\bar I}_m^{\text{T}} 
\otimes {\mathlarger{\mathbf{\mathfrak D}}}_{\pm}^{2-\beta} \right)
\textbf{vec}\left(\mathbf Y \right)\\
&\qquad+\textbf{vec}\left(\mathbf U\right)+
\textbf{vec}\left(\mathbf F\right),
\end{align*}
where $\mathbf I_{\mathbf C}:=\textbf{diag}\left(\textbf{vec}\left(\mathbf C\right)\right)$.
By utilizing \eqref{vvar}, we express the above system of equations as
\begin{equation*}
\begin{bmatrix}
\left(\mathbf D \otimes \mathbf I_n\right)-\mathbf I_{\mathbf C} 
\left(\mathbf{\bar I}_m^{\text{T}} \otimes {\mathlarger{
\mathbf{\mathfrak D}}}_{\pm}^{2-\beta} \right) 
&\Big|& \mathbf I_{nm}
\end{bmatrix} 
\mathbf v= \textbf{vec}\left(\mathbf F\right).
\end{equation*}
The constraints \eqref{iniQP} and \eqref{pathQP} are expressed as
\begin{align*}
&\begin{bmatrix}
\mathbf I_n&\big  |\mathbf 0_{n\times n(m-1)}
&\big |&\mathbf 0_{n\times nm}
\end{bmatrix}
\mathbf v = \mathbf g,\\
&\begin{bmatrix}
\mathbf 0_{nm\times nm}&\big |&\mathbf I_{nm}
\end{bmatrix}
\mathbf v 
\ge \textbf{vec}\left(\mathbf{U}_{\min}\right),
\end{align*}
where  
\begin{equation*}
\mathbf{U}_{\min}
:=
\Big[
u_{\min}(\hat{\xi}_i,\hat{\tau}_j)
\Big]_{\substack{i=1,\dots,n\\j=1,\dots,m}}.
\end{equation*}
In summary, the standard form of the QP \eqref{main-QP} 
is obtained as
\begin{subequations}
\label{main-sQP}
\begin{align}[left =  \empheqlbrace\,]
&\min \ \ J_{n,m}=\tfrac 1 2 \mathbf v^{\text{T}} 
\mathbf H \mathbf v +\mathbf c^{\text{T}}\mathbf v +c_0, \label{objsQP}\\
&\qquad\ \ \text{s.t.}\qquad\mathbf A \mathbf v=\mathbf b,\label{ceqsQP}\\
&\qquad\qquad\qquad\mathbf B\mathbf v\le \mathbf h,\label{pathsQP}
\end{align}
\end{subequations}
where
\begin{align*}
&\mathbf A:=
\left[	
\begin{array}{l|l}
\mathbf I_n\  \big  |\qquad\qquad \mathbf 0_{n\times n(m-1)}\ 
& \quad \mathbf 0_{n\times nm}\\ \hline
\left(\mathbf D \otimes \mathbf I_n\right)-\mathbf I_{\mathbf C} 
\left(\mathbf{\bar I}_m^{\text{T}} \otimes 
{\mathlarger{\mathbf{\mathfrak D}}}_{\pm}^{2-\beta} \right) 
\ \ & \quad \mathbf I_{nm}
\end{array}
\right],\\
& \mathbf b:=\left[\begin{array}{c} \mathbf g\\ \hline
\textbf{vec}\left(\mathbf F\right)
\end{array}
\right],\ 
\mathbf B:=-\begin{bmatrix}
\mathbf 0_{n(m+1)\times n(m+1)}&\big |&\mathbf I_{nm}
\end{bmatrix},\\
&\mathbf h:=-\textbf{vec}\left(\mathbf U_{\min}\right).
\end{align*}
Note that the dimension of $\mathbf A$ is $n(m+1)\times n(2m+1)$, 
where the number of non-zero elements is $nm(n+m+1)+n$.
We conclude that the percentage of non-zero elements decreases 
dramatically when the size of the matrix $\mathbf A$ is increased.
For instance, the percentage of non-zero elements of $A$ obtained 
with $n=m=10$ and $n=m=100$ is $9.1\%$ and $1\%$, respectively. 
Thus, matrix $\mathbf A$ is a sparse matrix. The sparsity pattern 
of the matrix $\mathbf A$, which is obtained with $n=m=10$, 
is plotted in Figure~\ref{fig:fig1}.
% ----------------------------
\begin{figure*}
\centering
\includegraphics[width=\linewidth]{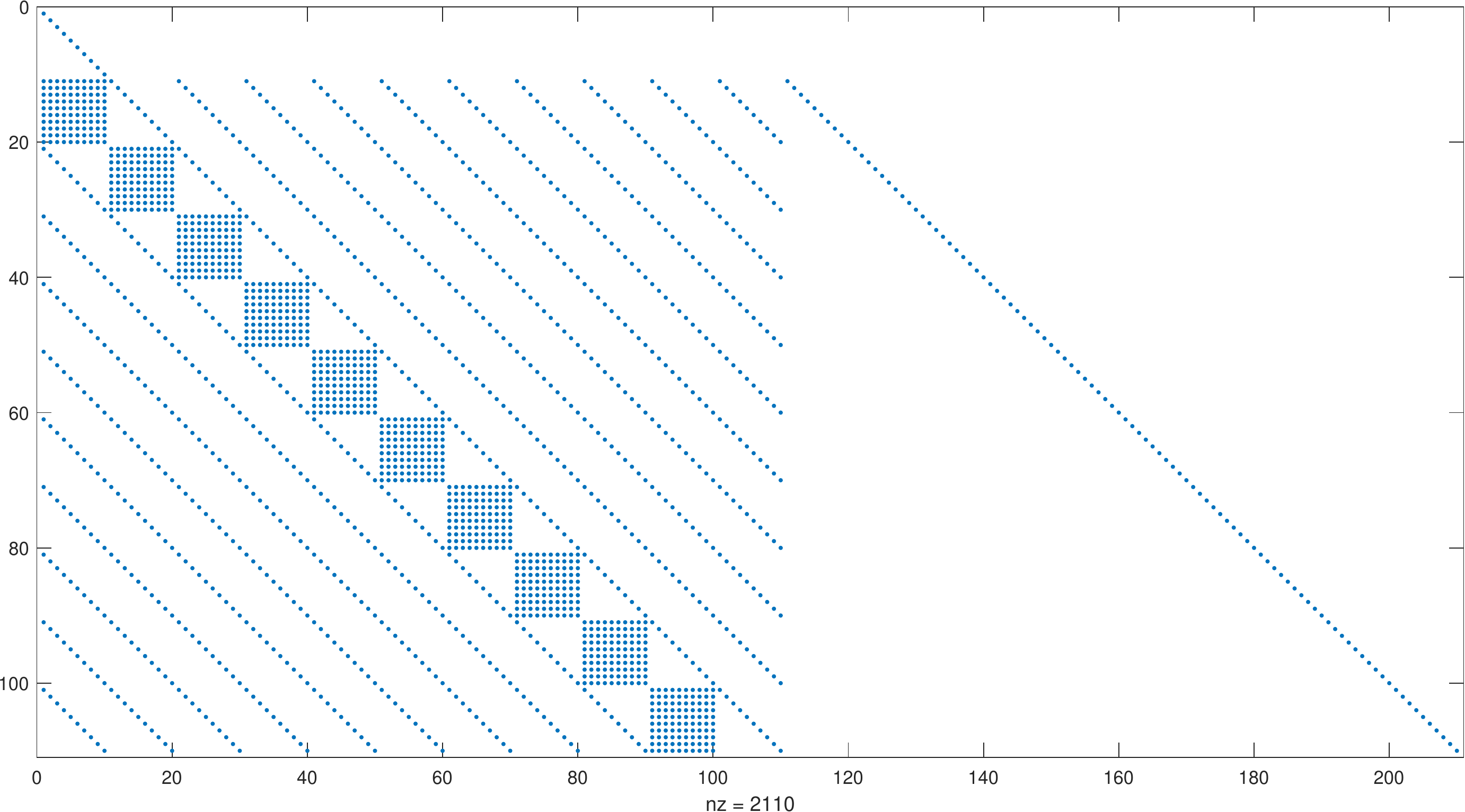}
\caption{The sparsity pattern of the matrix $\mathbf A$ with $n=m=10$.}
\label{fig:fig1}
\end{figure*}
% ----------------------------

The sparsity of the QP \eqref{main-sQP} is a suitable property 
from the optimization point of view. Another important 
and desirable property in optimization is convexity.
We emphasize that sparsity, and especially convexity, help to solve 
the obtained problem efficiently by well-developed algorithms, 
such as interior-point methods. In the next theorem, we show that 
the obtained problem \eqref{main-sQP} is convex. 

\begin{theorem}
The optimization problem \eqref{main-sQP} is a convex problem.
\end{theorem}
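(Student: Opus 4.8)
The plan is to check the two ingredients that together characterise a convex program: convexity of the feasible set and convexity of the objective. The feasible set is the intersection of the affine set $\{\mathbf v:\mathbf A\mathbf v=\mathbf b\}$ with the polyhedron $\{\mathbf v:\mathbf B\mathbf v\le\mathbf h\}$ defined by \eqref{ceqsQP}--\eqref{pathsQP}; each is convex and so is their intersection, so this part requires no work. Consequently everything reduces to the objective \eqref{objsQP}, a quadratic function $J_{n,m}(\mathbf v)=\tfrac12\mathbf v^{\text{T}}\mathbf H\mathbf v+\mathbf c^{\text{T}}\mathbf v+c_0$, which is convex if and only if its Hessian $\mathbf H$ is positive semidefinite. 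Thus the whole theorem rests on establishing $\mathbf H\succeq 0$.

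To do this I would use the explicit block form of $\mathbf H$. Every block is either a zero block or the diagonal matrix $\text{diag}(\boldsymbol{\varpi}\otimes\boldsymbol{\omega})$, so $\mathbf H$ is itself a diagonal matrix; the middle zero block corresponds to the endpoint state $\mathbf y_{m+1}$, which does not enter the objective and merely makes $\mathbf H$ semidefinite rather than definite. A diagonal matrix is positive semidefinite exactly when its diagonal entries are nonnegative, so the claim reduces to showing that every entry of $\boldsymbol{\varpi}\otimes\boldsymbol{\omega}$ is nonnegative. By the definition of the Kronecker product each such entry is a product of one component of $\boldsymbol{\varpi}$ and one component of $\boldsymbol{\omega}$, that is, $\big(\tfrac T2\varpi^{\text{\tiny(0,0)}}_j\big)\big(\tfrac12\omega^{\text{\tiny(1,1)}}_i\big)$, with the prefactors $T/2$ and $1/2$ strictly positive since $T>0$.

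The only genuine ingredient is therefore the positivity of the quadrature weights themselves, which I would derive from the exactness of the two rules. Applying the Jacobi--Gauss rule \eqref{JGQ} to the nonnegative integrand $\ell_j^2$, a polynomial of degree $2n-2\le 2n-1$ hence integrated exactly, and using the Kronecker property, collapses the sum to a single term and gives $\omega^{\text{\tiny(1,1)}}_j=\int_0^1 w(x)\,\ell_j(x)^2\,\text{d}x>0$, the integral of a nonnegative, not identically zero, function against the (nonnegative) Jacobi weight $w$. The same device applied to the Legendre--Gauss--Radau rule \eqref{JGLQ}, using the square of the Lagrange polynomial built on the $m$ quadrature nodes $\{\tau^{(0,0)}_i\}_{i=1}^m$ (degree $2m-2$, matching the degree of exactness), yields $\varpi^{\text{\tiny(0,0)}}_j>0$. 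With both families of weights positive, all diagonal entries of $\mathbf H$ are nonnegative, so $\mathbf H\succeq 0$; combined with the convex feasible set, this proves \eqref{main-sQP} convex. I do not expect a deep obstacle here, since the positivity of Gauss-type weights is classical; the only point demanding care is matching the degree of each squared Lagrange polynomial to the degree of exactness of its quadrature rule, so that the exactness argument is legitimate.
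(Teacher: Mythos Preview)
Your proposal is correct and follows essentially the same line as the paper's proof: both observe that the constraints are affine, that $\mathbf H$ is diagonal with entries built from products of Gauss--Radau and Jacobi--Gauss weights, and conclude convexity from the positivity of those weights. The only difference is that the paper cites a reference for the positivity of the quadrature weights, whereas you supply the classical exactness argument with squared Lagrange basis polynomials; your degree-counting for both rules is accurate.
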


\begin{proof}
Since the quadrature weights ${\omega}^{\text{\tiny{(1,1)} }}_i$, $i=1,\ldots,n$, 
and ${\varpi}^{\text{\tiny{(0,0)} }}_j$, $j=1,\ldots,m$, are positive, 
it follows that $\mathbf H$ is a diagonal matrix with non-negative 
diagonal elements \citep{Gautschi}. Consequently, $\mathbf H$ is a positive 
semi-definite matrix. As a result, the objective function is a convex function. 
Moreover, the equality and inequality constraints of problem \eqref{main-sQP} are affine. 
As a result, problem \eqref{main-sQP} is a convex quadratic program.
\end{proof}

% ------------------------------------------

\section{Numerical experiments}
\label{sec:5}

This section is devoted to illustrate the presented pseudospectral 
method using numerical experiments. We have implemented our method  
using \textsc{Matlab} on a 3.5 GHz Core i7 personal computer 
with 8 GB of RAM. Moreover, for solving the QP \eqref{main-sQP}, 
the solver \textsc{Ipopt} \citep{Waechter2006} was used. 
In \textsc{Ipopt}, we can adjust the accuracy of the solution 
through the input parameter \texttt{tolrfun}. In our numerical 
experiments, we set \texttt{tolrfun}=$10^{-12}$.

We consider five different examples. 
The first example was treated in \citet{Ning} 
and has an exact solution. We use this example 
to assess the accuracy of our method. 
The second example is considered in  
\citet{doi:10.1177/1077546317705557} and has 
a high variation in its solution. With this example 
we check the efficiency of our method.
The third example is a new problem with state constraints 
and shows that, with little changes in the presented method, 
we can easily solve hard problems.
With the last two examples, we investigate the effect 
of parameter $\beta$ on the behavior of the solution 
and the performance of our method.

% ---------------

\subsection*{Example~1}

In this example, which is taken from  \citet{Ning}, 
the problem  \eqref{main-prob} with the following data is considered:
\begin{align*}
&\beta=0.2,\ r=0.8,\ T=1,\ g(x)=0,\ u_{\min}(x,t)=1,\\
& c(x,t)=\tfrac{1+xt}{100},\, f(x,t)=-\max\hspace{-2pt}\left\{\tfrac{100x^2(1-x)^2 \sin(T-t)}{1+xt},1\right\},\\
&z(x,t)=\tfrac{100x^3(1-x)^2\sin(T-t)}{(1+xt)^2}+\tfrac{100x^2(1-x)^2\cos(T-t)}{1+xt}\\
&-\tfrac{2\sin(T-t)}{5\Gamma(1.2)}\big[(x^{0.2}+4(1-x)^{0.2}-5(x^{1.2}+4(1-x)^{1.2})\\ &\qquad\qquad+\tfrac{50}{11}(x^{2.2}+4(1-x)^{2.2})\big].
\end{align*}
The exact solution of this problem is
\begin{align*} 
&y_{ex}(x,t)=0, 
&u_{ex}(x,t)=\max\left\{\tfrac{100x^2(1-x)^2 \sin(T-t)}{1+xt},1\right\}.
\end{align*}
By applying the presented method with $m=n=50$, the obtained control 
$u$ and state $y$ are plotted in Figure~\ref{fig:fig2}. 
% --------------------------------
\begin{figure*}[ht]
\centering
\includegraphics[width=1\textwidth]{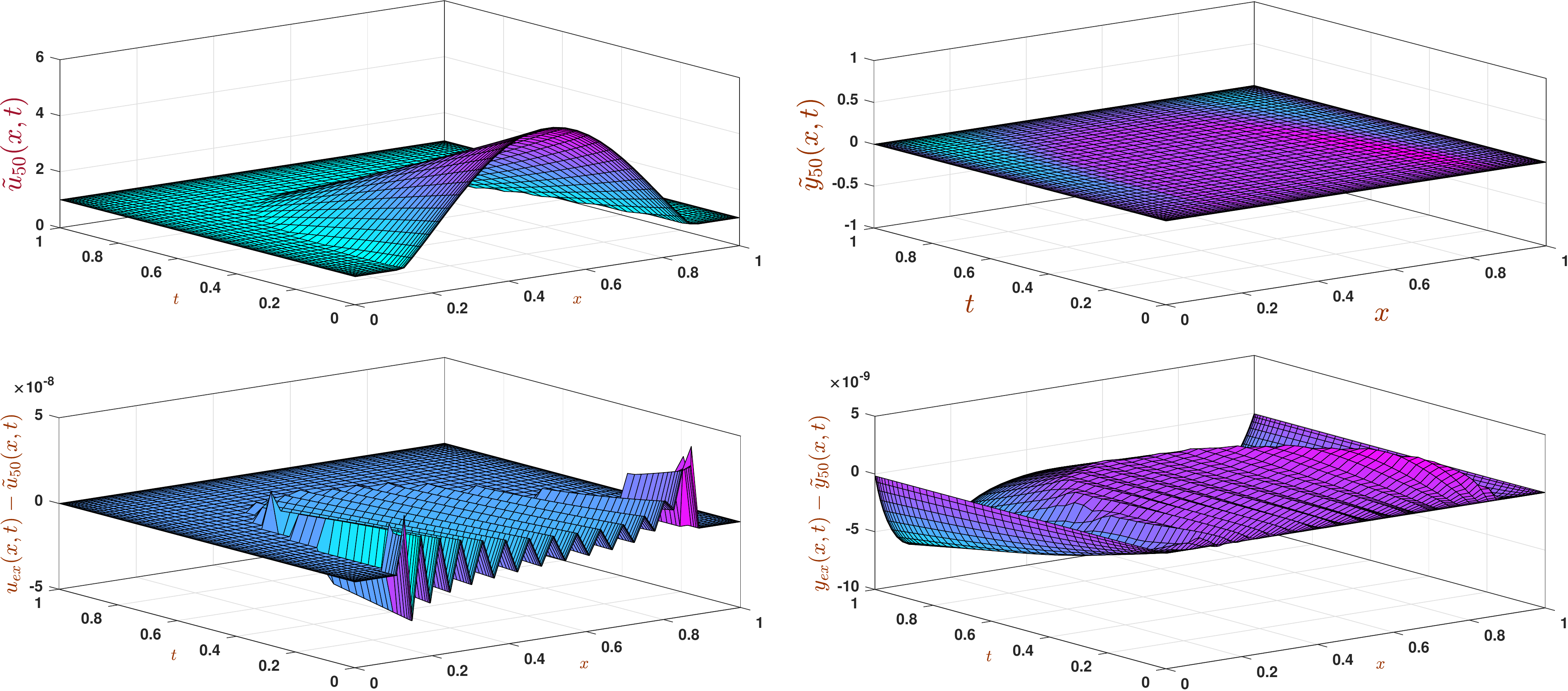}
\caption{Control, state and error functions obtained 
by the proposed method with $n=m=50$ (Example~1).}
\label{fig:fig2}
\end{figure*}
% --------------------------------
Moreover, the errors of the obtained control and state functions are plotted in this figure too. 
In Table~\ref{tab:tab1}, the consumed CPU time, error of the obtained value of performance 
index and  error of the obtained control and state functions, 
for various values of $n$ and $m$, are reported. 
In this table, $E_{n,m}(J)$ is the absolute error of the value 
of the performance index, i.e., 
$E_{n,m}(J):=\mid J_{\text{ex}}-J_{n,m} \mid$
and $E_{n,m}^2(u)$ and $E_{n,m}^\infty(u)$ are, respectively,
the $2$-norm and infinity-norm of the error for the obtained 
state function, which are defined as
\begin{align*}
&E_{n,m}^2(u):=\left[ \sum_{i=1}^n\sum_{j=1}^m 
\left(u_{\text{ex}}(\hat \xi_i,\hat \tau_j)
-u_{i,j} \right)^2 \right]^{\frac 1 2},\\
&E_{n,m}^\infty(u):=\max_{\substack{i=1,\dots,n\\
j=1,\dots,m}} \mid u_{\text{ex}}(\hat \xi_i,\hat \tau_j)-u_{i,j} \mid.
\end{align*}
% --------------------------------
\begin{table*}[ht]
\centering
\caption{\label{tab:tab1}
Consumed CPU time and obtained norms of errors for various values of 
$n$ and $m$ together with the results of \citet{Ning} (Example~1).}
\begin{tabular}{lllllllllllll}
\hline
\multicolumn{8}{c}{The Presented Method} &  & \multicolumn{4}{c}{Method of \cite{Ning} (PCG)} \\ 
\cline{1-8}\cline{10-13}
$n$ & $m$ & CPU & $E_{n,m}(J)$ & $E^2_{n,m}(u)$ & $E^\infty_{n,m}(u)$ 
& $E^2_{n,m}(y)$ & $E^\infty_{n,m}(y)$ &  & $N=M$ & CPU 
& $\|u_{\text{ex}}-u_h\|_{l^2}$ & $\|y_{\text{ex}}-y_h\|_{l^2}$ \\ 
\cline{1-8}\cline{10-13}
10 & 10 & 0.7 & 1.24e-3 & 3.67e-5 & 1.38e-4  & 7.01e-6 & 2.18e-5 &  & 32 & 0.25 & 1.8638e-4 & 9.5396e-5 \\ 
20 & 20 & 0.6 & 5.71e-4 & 9.36e-7  & 5.23e-6 & 7.65e-8  & 3.27e-7 &  & 64 & 0.92 & 4.7116e-5 & 2.4422e-5 \\ 
30 & 30 & 1.0 & 1.48e-5  & 9.24e-8 & 3.91e-7 & 5.56e-9 & 2.50e-8 &  & 128 & 3.77 & 1.1825e-5 & 6.1531e-6 \\ 
40 & 40 & 2.5 & 2.24e-6  & 1.98e-8 & 8.79e-8 & 2.33e-9 & 6.82e-9 &  & 256 & 15 & 2.9647e-6 & 1.5343e-6 \\ 
50 & 50 & 7.1 & 6.07e-7 & 6.28e-9 & 4.84e-8 & 2.30e-9 &  6.62e-9 &  & 512 & 62 & 7.5488e-7 & 3.7709e-7 \\ 
60 & 60 & 14.9 & 1.00e-7 & 2.50e-9 & 2.34e-8 & 2.37e-9 & 6.51e-9 &  & 1024 & 257 & 2.2979e-7 & 9.3049e-8 \\ 
\hline
\end{tabular}
\end{table*}
% --------------------------------
From Table~\ref{tab:tab1}, we can see, 
with small numbers of $n$ and $m$, that an accurate solution 
with low CPU time is obtained.
Moreover, it is  seen that with $n=m=50$, an accurate  solution is obtained 
in just 7.1 seconds. Note that by the indirect method of \citet{Ning},
more than 4 minutes are needed for obtaining a solution with such accuracy.

% -----------------------------------------------

\subsection*{Example 2} 

This example, with the following data, 
is treated in  \citet{doi:10.1177/1077546317705557}:
\begin{align*}
&\beta=0.5,\ r=0.25,\ T=30,\ y(x,0)=1,\\  
&c(x,t)=\tfrac{1+x(1-x)t}{10},\  f(x,t)=1, \ z(x,t)=1+\tfrac{x}{1+t},\\
&u_{\min}(x,t)=\max\left\{xe^{-2(x-0.5)},\sin(2x(1-x)t^{0.6})\right\}.
\end{align*}
The resulted control and state functions by the presented method, with 
$n=m=100$, are plotted in Figure~\ref{fig:fig3}. 
% -------------------------------
\begin{figure*}[ht]
\centering
\includegraphics[width=1\textwidth]{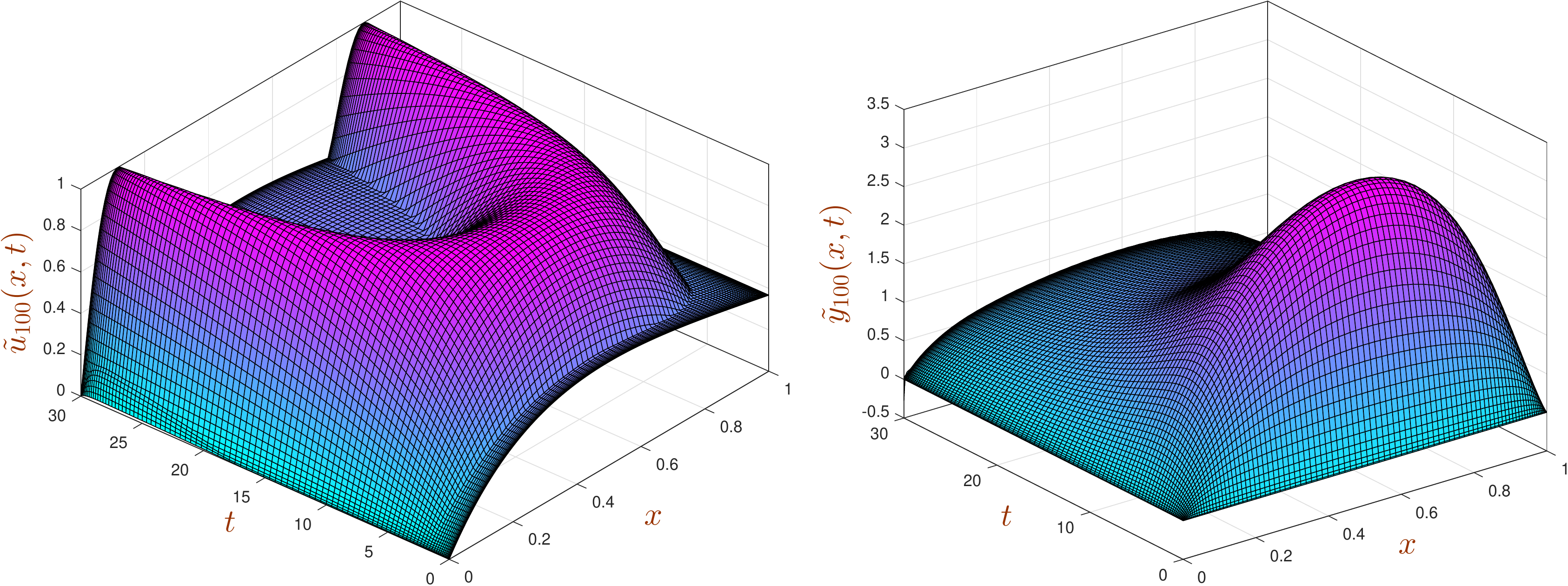}
\caption{Control and state functions obtained by the proposed method with $n=m=100$ (Example~2).}
\label{fig:fig3}
\end{figure*}
% -------------------------------
The obtained control and state functions are in good agreement 
with the results of \citet{doi:10.1177/1077546317705557}.
To report the efficiency and precision of the method, 
we give in Table~\ref{tab2} the CPU time and 
the obtained values of the performance index. 

% -------------------------------
\begin{table*}[ht]
\centering
\caption{\label{tab2}
Consumed CPU time and obtained values of the performance index 
for various values of $n$ and $m$ (Example~2).}
\begin{tabular}{lllllll} \hline
\multicolumn{1}{c}{$n$}& \multicolumn{1}{c}{} & \multicolumn{1}{c}{$m$} 
& \multicolumn{1}{c}{} & \multicolumn{1}{c}{CPU Time} 
& \multicolumn{1}{c}{} & \multicolumn{1}{c}{$J_{n,m}$} \\ \hline
20 &  & 20 &  & 0.6 s  &  & 17.3176450\\ 
40 &  & 40 &  & 3.2 s  &  & 17.3055445\\ 
60 &  & 60 &  & 22.7 s &  & 17.3028343\\ 
80 &  & 80 &  & 100.6 s&  & 17.3028684 \\ 
100 && 100 &  & 311.6 s&  & 17.3028411 \\ \hline
\end{tabular}
\end{table*}

% -----------------------------------------------

\subsection*{Example~3}

Now we consider the problem of Example~1 subject to the extra state constraint 
$y(x,t)\ge y_{\min}(x,t)$, where
\begin{equation*}
y_{\min}(x,t):=\sqrt{\max\left\{ 0,0.1-(x-0.5)^2-(t-0.5)^2 \right\}}.
\end{equation*}  
The numerical solution of this problem with indirect methods is troublesome
because the derivation of necessary optimality conditions for such optimal control
problems with state constraints is difficult. Moreover, the numerical solution 
of the resulted necessary optimality conditions is complicated. 
However, extending our direct method to solve such problems is straightforward.  
We just need to add the following bound constraints to the QP \eqref{main-QP}:
\begin{equation*}
\mathbf y_j\ge \mathbf y_{\min}(\hat{\tau}_j), 
\quad j=1,\dots,m+1,
\end{equation*}
where $\mathbf y_{\min}(t):=\big[y_{\min}(\hat{\xi}_1,t),\dots,y_{\min}(\hat{\xi}_n,t)\big]^{\text{T}}$.
Imposing the above constraints is simply done by changing the definition of 
$\mathbf B$ and $\mathbf h$ in the QP \eqref{main-sQP} as follows:
\begin{equation*}
\mathbf B:=-\begin{bmatrix}
\mathbf I_{n(m+1)}&\big |&\mathbf I_{nm}
\end{bmatrix},\qquad
\mathbf h:=-\begin{bmatrix}
\textbf{vec}\left(\mathbf Y_{\min}\right)\\ \hline
\textbf{vec}\left(\mathbf U_{\min}\right)
\end{bmatrix},
\end{equation*}
where 
\begin{equation*}
\mathbf Y_{\min}:=
\Big[
y_{\min}(\hat{\xi}_i,\hat{\tau}_j)
\Big]_{\substack{i=1,\dots,n\ \ \\j=1,\dots,m+1}}.
\end{equation*}
By applying our direct method with $n=m=100$, the obtained control and state 
functions are plotted in Figure~\ref{fig:fig4}. Moreover, function 
$y_{\min}(x,t)$ is plotted beside the state function $y_{100}(x,t)$. 
It is clear that in this example, contrary to Example~1, 
the state function is not zero and lies above function $y_{\min}(x,t)$.
% -------------------------
\begin{figure*}
\centering
\includegraphics[width=\textwidth]{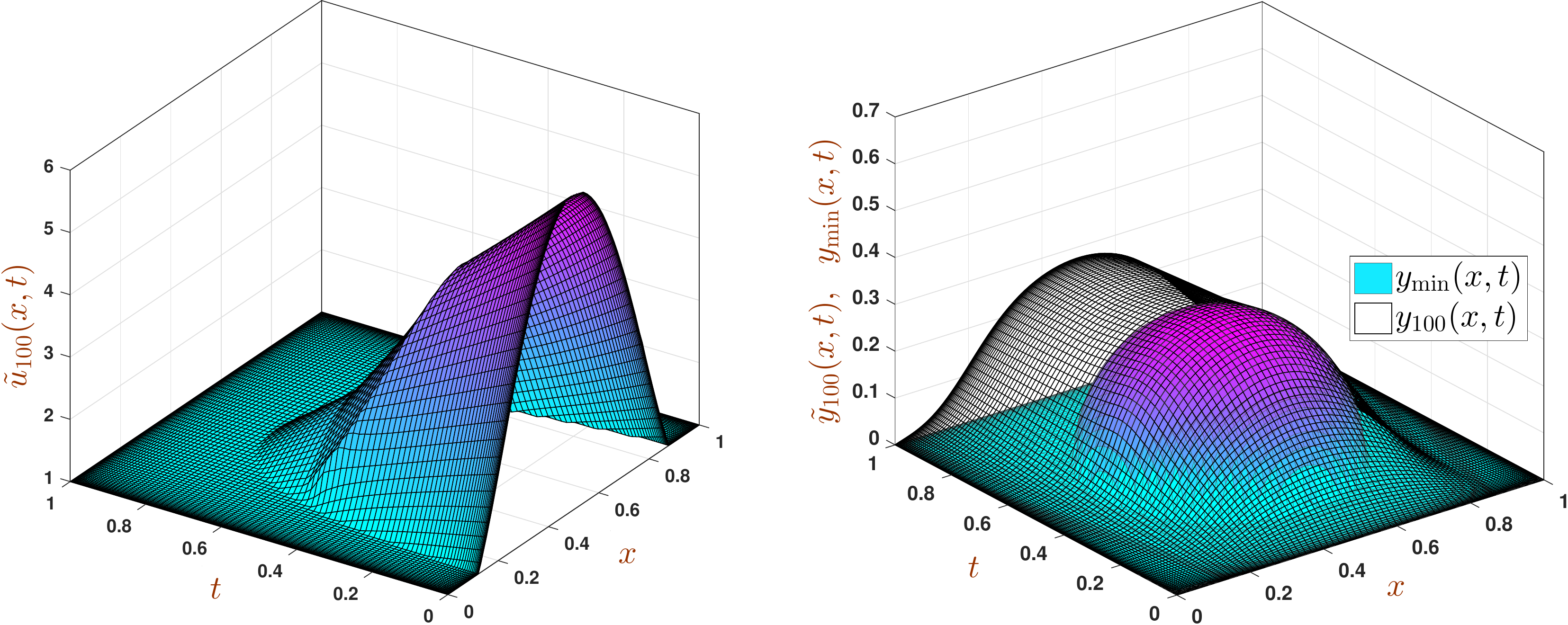}
\caption{Control and state functions obtained with $n=m=100$
for problem of Example~1 subject to a state constraint 
$y(x,t)\ge y_{\min}(x,t)$ (Example~3).}
\label{fig:fig4}
\end{figure*}

% -----------------------------------------------

\subsection*{Example~4}

Consider now problem \eqref{main-prob} with
\begin{align*} 
&r=0.5,\ T=3,\ y(x,0)=\sin(\pi x),\ c(x,t)=1,\\  
&f(x,t)=0, \ z(x,t)=0.5,\ u_{\min}(x,t)=0.
\end{align*}
In Figure~\ref{fig:fig5}, the obtained solutions with $m=n=60$ 
on this problem, for $\beta=0.1,\,0.5,\,0.9$, are plotted. 
% ------------------------------------------
\begin{figure*}
\centering
\includegraphics[width=\textwidth]{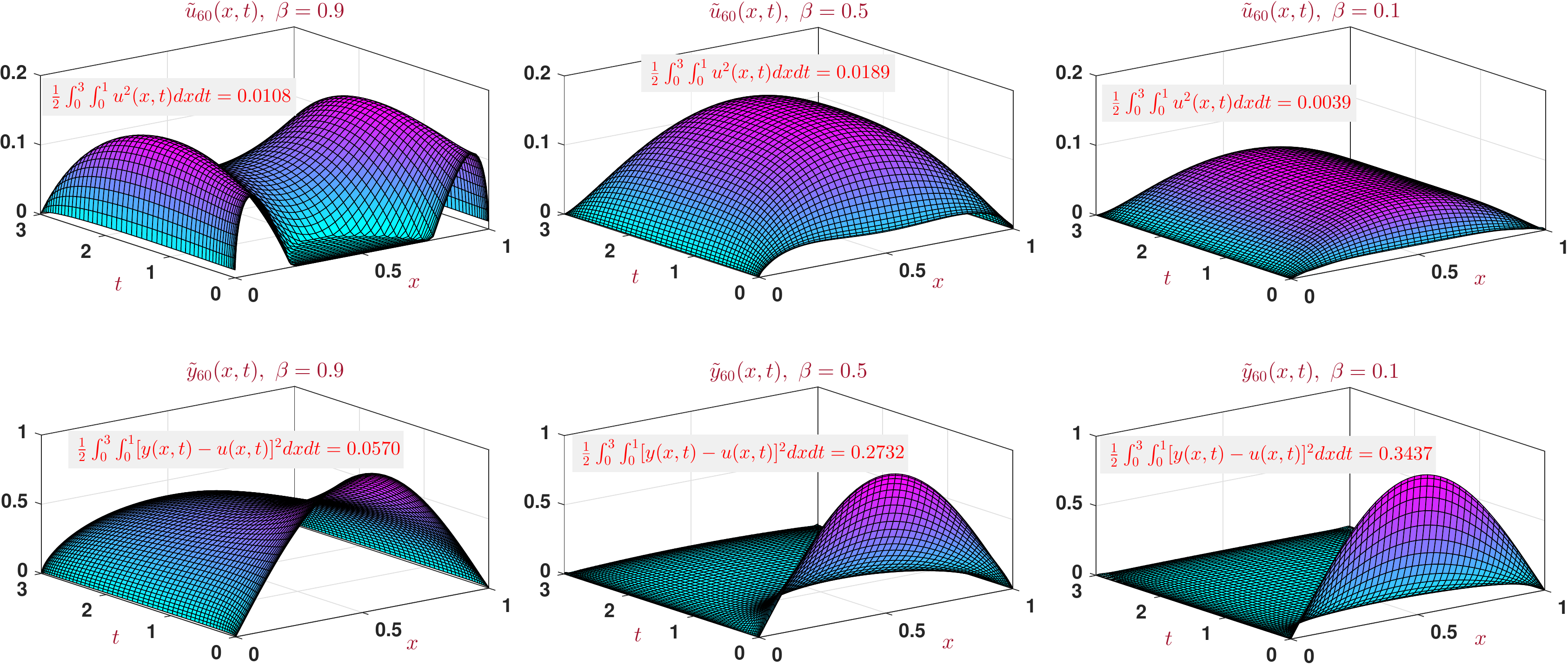}
\caption{Control and state functions obtained by the presented method 
with $n=m=60$, for $\beta=0.1,\,0.5,\,0.9$ (Example~4).}
\label{fig:fig5}
\end{figure*}
% ------------------------------------------
Moreover, the values of the first and second terms in the performance index 
\eqref{main-prob-obj} are reported as well. It is seen, 
for smaller values of $\beta$, that the obtained state $\tilde y_{60}$ 
is not close to the target function $z(x,t)=\tfrac 1 2$. Indeed, 
when the value of $\beta$ is small, then a large magnitude control function 
is needed to generate a state function close to $z(x,t)$. In other words, 
much energy is needed to force $y(x,t)$ to become close to $z(x,t)$. 
As a result, in these cases, the optimal control is close to its lower bound 
and the the corresponding state function is not close to the target function. 

% -----------------------------------------------

\subsection*{Example~5}

As a final example, let
\begin{align*}
&r=0.5,\ T=3,\ y(x,0)=x^4(1-x)^4, u_{\min}(x,t)=1,\\  
&c(x,t)=1,\ z(x,t)=e^tx^4(1-x)^4,\\
& f(x,t)=e^t\left[x^4(1-x)^4-12 \left( \bar f(x)+\bar f(1-x)\right)\right],
\end{align*}
where
\begin{equation*}
\bar f(x)=\hspace{-2pt}\tfrac {1680 {x}^{6+\beta}} {\Gamma \left( 7+\beta \right) }\hspace{-1pt}-\hspace{-1pt}
\tfrac {840 {x}^{5+\beta}} {\Gamma \left( 6+\beta \right) }\hspace{-1pt}+\hspace{-1pt}
\tfrac{180 {x}^{4+\beta}}{\Gamma \left( 5+\beta \right) }\hspace{-1pt}-\hspace{-1pt}
\tfrac {20	{x}^{3+\beta}}{\Gamma \left( 4+\beta \right) }\hspace{-1pt}+\hspace{-1pt}
\tfrac {{x}^{2+\beta}}{\Gamma \left( 3+\beta \right) }.
\end{equation*}
The exact solution is given by
\begin{equation*}
u_{\text{ex}}:=0,\quad y_{\text{ex}}:=e^tx^4(1-x)^4,\ \ J_{\text{ex}}=1.5. 
\end{equation*}
We applied the presented method on this problem with different values of $\beta$. 
The obtained errors $E_n^2(y)$ and $E_n(J)$, for various values of $n=m$, 
are reported in Table~\ref{tab3}.
% ------------------------------------------
\begin{table*}[ht]
\centering
\caption{\label{tab3} Obtained norms of errors for various values of $\beta$ and $n=m$. (Example~5).}
\begin{tabular}{cllllllllllll} \hline
&  & \multicolumn{5}{c}{$E_n^2(y)$} &  & \multicolumn{5}{c}{$E_n(J)$} \\ 
\cline{3-7}\cline{9-13}
$n=m$ &  & $\beta=0.1$ & $\beta=0.3$ & $\beta=0.5$ & $\beta=0.7$ & $\beta=0.9$ 
&  & $\beta=0.1$ & $\beta=0.3$ & $\beta=0.5$ & $\beta=0.7$ & $\beta=0.9$ \\ 
\cline{1-1}\cline{3-7}\cline{9-13}
3 &&1.7e-02 &1.4e-02 &1.0e-02 &6.6e-03 &2.7e-03 &&8.4e-04 &5.4e-04 &3.0e-04 &1.3e-04 &2.1e-05 \\ 
4 &&4.9e-03 &3.8e-03 &2.7e-03 &1.7e-03 &6.7e-04 &&4.6e-05 &2.8e-05 &1.5e-05 &6.7e-06 &1.5e-06 \\ 
5 &&5.3e-04 &4.5e-04 &3.7e-04 &2.8e-04 &1.5e-04 &&6.5e-07 &4.8e-07 &3.3e-07 &2.0e-07 &6.4e-08 \\ 
6 &&1.2e-04 &1.2e-04 &1.1e-04 &9.1e-05 &6.0e-05 &&2.9e-08 &2.7e-08 &2.3e-08 &1.7e-08 &8.6e-09 \\ 
7 &&8.5e-08 &8.9e-08 &8.9e-08 &8.7e-08 &8.2e-08 &&8.7e-15 &1.1e-14 &1.3e-14 &1.4e-14 &1.5e-14 \\ 
8 &&7.2e-09 &7.3e-09 &7.4e-09 &8.0e-09 &1.1e-08 &&2.9e-15 &4.4e-15 &6.2e-15 &8.4e-15 &1.2e-14 \\ 
9 &&9.6e-10 &1.3e-09 &2.2e-09 &4.1e-09 &9.4e-09 &&2.7e-15 &4.2e-15 &6.2e-15 &8.1e-15 &9.9e-15 \\ 
10 &&8.5e-10 &1.3e-09 &2.1e-09 &4.1e-09 &9.5e-09 &&2.2e-16 &9.8e-16 &2.7e-15 &6.7e-15 &8.5e-15 \\ 
\hline
\end{tabular}
\end{table*}
% ------------------------------------------
As we see, when $\beta$ is close to 1, 
then the accuracy is decreased slightly. This fact is predictable. 
Indeed, if $\beta$ is close to 1, then equation \eqref{main-prob-dyn} 
tends to the advection equation, which is a difficult problem 
from the numerical point of view. However, we see that our method can obtain 
a solution with 8 digits of accuracy, even for large values of $\beta$.

% ---------------------------------

\section{Conclusions}
\label{sec:6}

We proposed a fully direct pseudospectral method 
to solve the optimal control of a two-sided space-fractional diffusion equation. 
Thanks to useful properties of the Jacobi polynomials, accurate and stable 
procedures for deriving the left and right fractional differentiation matrices 
were presented. In our method, the solution of the problem reduces to the solution 
of a convex quadratic programming problem. Five examples were solved 
and results reported. These results show that the fully 
direct pseudospectral method is efficient and provides 
accurate results, whereas a small number of collocation points 
is used and a low CPU time is consumed. Moreover, our third example shows 
that the method here introduced can be also applied with success to difficult 
optimal control problems with inequality 
constraints on the state function. 

Obtaining some theoretical estimates for the approximation 
errors would be desirable. This work is currently in progress.

% ------------------------------------------

\begin{acks}
Torres has been partially supported by FCT 
(The Portuguese National Science Foundation) 
through the R\&D unit CIDMA, project UID/MAT/04106/2013. 
Bozorgnia was supported by FCT fellowship SFRH/BPD/33962/2009.
	
The authors are very grateful to three anonymous referees for carefully reading 
their manuscript and for several comments and suggestions which helped them
to improve the paper.
\end{acks}

% ------------------------------------------

% ------------------------------------------

\end{document}